\newtheorem{prop}{Proposition}[section]
\newtheorem{cor}{Corollary}[section]
\newtheorem{thm}{Theorem}[section]
\theoremstyle{remark}
\newtheorem{oss}{Remark}[section]
\theoremstyle{definition}
\newtheorem{defn}{Definition}[section]
\newcommand{\gt}{{g}_\tau}
\newcommand{\ads}{\mathbb{H}^3_1(\kappa / 4)}
\newcommand{\adsk}{\mathbb{H}^3_1 (\kappa)}
\newcommand{\adsl}{\mathbb{H}^3_{1, \tau}}
\newcommand{\nt}{{\nabla}^\tau}
\newcommand{\Rt}{{R}^\tau}
\newcommand{\lm}{\lambda}
\newcommand{\nq}{\nu^2}
\newcommand{\rad}{\sqrt{\lm+\nu^2}}
\newcommand{\tb}{\tilde{b}}
\newcommand{\ta}{\tilde{a}}
\newcommand\B{{\mathbb B}} 
\newcommand\R{{\mathbb R}} 
\newcommand{\Rqd}{{\mathbb R}^4_2}
\newcommand{\sk}{\sqrt{\kappa}}
\newcommand{\kq}{\frac{\kappa}{4}}
\newcommand{\skm}{\frac{\sk}{2}}
\newcommand{\dskm}{\dfrac{\sk}{2}}
\newcommand{\tq}{\tau^2}
\newcommand{\radkm}{\dfrac{\sqrt{\kappa(\lm+\nu^2)}}{2}}
\newcommand{\sluu}{\sqrt{ w_{11}}}
\newcommand{\sltt}{\sqrt{-w_{33}}}
\newcommand{\cst}{\mathrm{constant}}
\def \r{\mbox{${\mathbb R}$}}
\def \s{\mbox{${\mathbb S}$}}
\title{Helix surfaces for Berger-like metrics \\ on the anti-de Sitter space}
\author{Giovanni Calvaruso}
\address{Dipartimento di Matematica e Fisica \lq\lq E. De Giorgi\rq\rq, Universit\`a del Salento, Prov. Lecce-Arnesano, 73100 Lecce, Italy}
\email{giovanni.calvaruso@unisalento.it}
\author{Irene I. Onnis}
\address{Dipartimento di Matematica e Informatica, Università degli Studi di Cagliari, Via Ospedale 72,
09124 Cagliari, Italy}
\email{irene.onnis@unica.it}
\author{Lorenzo Pellegrino}
\address{Dipartimento di Matematica e Fisica \lq\lq E. De Giorgi\rq\rq, Universit\`a del Salento, Prov. Lecce-Arnesano, 73100 Lecce, Italy}
\email{lorenzo.pellegrino@unisalento.it}
\author{Daria Uccheddu}
\address{Dipartimento di Matematica e Informatica, Università degli Studi di Cagliari, Via Ospedale 72,
09124 Cagliari, Italy}
\email{daria.uccheddu@unica.it}
\date{}
\subjclass{53B25, 53C50}
\keywords{Helix surfaces, constant angle surfaces, Anti-de Sitter Space}
\thanks{The second and the fourth author were supported by a grant of Fondazione di Sardegna (Project GoAct).  The second author was also supported by the Thematic Project: Topologia Álgebrica,  Geométrica e Diferencial, Fapesp process number 2016/24707-4. The third author was supported by a scolarship of INdAM (Istituto Nazionale di Alta Matematica "Francesco Severi") reserved for master students in Mathematics}
\begin{document}

\begin{abstract}
We consider the Anti-de Sitter space $\mathbb{H}^3_1$ equipped with Berger-like metrics, that deform the standard metric of 
$\mathbb{H}^3_1$ in the direction of the hyperbolic Hopf vector field. Helix surfaces are the ones forming a constant angle with such vector field. After proving that these surfaces have (any) constant Gaussian curvature,  we achieve  their explicit local description in terms of a one-parameter family of isometries of the space and some suitable curves. These curves turn out to be  general helices, which meet at a constant angle the fibers of the hyperbolic Hopf fibration. 
\end{abstract}

\maketitle

\section{Introduction}
A {\em helix surface}  (or {\em constant angle surface}) is an oriented surface, whose normal vector field forms a constant angle with a
fixed field of directions in the ambient space. In recent years, many authors investigated helix surfaces in different ambient spaces.
Several examples of the study of helix surfaces in Riemannian settings may be found in \cite{CD}-\cite{DMVV},\cite{FMV},\cite{LM},\cite{MO},\cite{MOP},\cite{Ni} and references therein. 

The investigation of helix surfaces also extended to other settings. On the one hand, higher codimensional Riemannian helix surfaces were studied (see for example \cite{DR},\cite{DR2},\cite{Ru}). On the other hand, Lorentzian ambient spaces were considered. Lorentzian settings allow to more possibilities, as both spacelike and timelike surfaces can be studied. Some examples of the study of the geometry of helix surfaces in Lorentzian spaces are given in \cite{LM2},\cite{LO},\cite{OP},\cite{OPP}. In particular, helix surfaces of the anti-de Sitter space  $\mathbb{H}^3_1$ were studied in \cite{LO}. Equipping 
$\mathbb{H}^3_1$ with its canonical metric of constant curvature, all left-invariant vector fields are Killing (see also \cite{CP}), so no special directions emerge. Moreover, all helix surfaces turn out to be flat \cite{LO}. 

In \cite{CP}, the first author and D. Perrone introduced and studied a new family  of metrics $\tilde g_{\lambda\mu\nu}$ on $\ads$. These metrics were induced in a natural way by corresponding metrics defined on the tangent sphere bundle $T_1 \mathbb H^2 (\kappa)$, after  describing the covering map $F$ from $\mathbb H_1 ^3 (\kappa/4)$  to  $T_1 \mathbb H ^2(\kappa)$  in terms of paraquaternions. A crucial role in this construction is played by the {\em hyperbolic Hopf map} and the {\em hyperbolic Hopf vector field}, that is, the hyperbolic counterparts of the Hopf map and vector field on $\mathbb S^3$, respectively. 

 In \cite{MO},\cite{MOP},\cite{OP} and \cite{OPP},  the second author et al.  gave an explicit local classification of helix surfaces by means a one-parameter family of isometries of the ambient space and a suitable curve.   A similar appproach will be used in this paper in order to find the explicit characterization of the helix surfaces in the anti-de Sitter space.  We shall equip the anti-de Sitter space $\ads$ with a special type of the Lorentzian metrics introduced in \cite{CP}, that is, the ones that deform the standard metric of $\ads$ only in the direction of the hyperbolic Hopf vector field, which is then a Killing vector field.  Because of their analogies with the Berger metrics on $\mathbb S^3$, we shall refer to these metrics as {\em Berger-like metrics}. We shall consider the anti-de Sitter space $\ads$ equipped with Berger-like metrics and completely describe their surfaces, whose normal vector field forms a constant angle with the hyperbolic Hopf vector field. 

The paper is organized in the following way. In Section~2 we provide some needed information on the anti-de Sitter space $\ads$. 
In Section~3 we describe the Levi-Civita connection and the curvature of Berger-like metrics $\gt$ on $\ads$. The general equations for a surface of $(\ads,\gt)$ are given in Section~4 and then are applied in Section~5 to the case of helix surfaces. Three different cases occur, according to the sign of some constant $B$, which depends on the parameter $\tau$ of the Berger-like metric, the causal character of the surface and the constant angle. These three cases are completely described and characterized  in Section~6, in terms of a $1$-parameter family of isometries on $\adsl$ and some suitable curves. As we prove in Section~7,  these curves are general helices, which meet at a constant angle the fibers of the hyperbolic Hopf fibration.

\section{Preliminaries}
Let $\mathbb R^4_2 = (\mathbb R^4, g_0)$ denote the $4$-dimensional pseudo-Euclidean space, of neutral signature $(2,2)$, equipped with the flat pseudo-Riemannian  metric
$$g_0=dx_1^2+dx_2^2-dx_3^2-dx_4^2.$$ 
For any real constant $\kappa>0$,  the {\em anti-de Sitter (three)-space}  $\adsk$ 
is the hypersurface of $\mathbb R^4_2$ defined by
$$\mathbb H^3_1 (\kappa)=\{(x_1,x_2,x_3,x_4) \in \mathbb R^4 :  x_1^2 +x_2^2-x_3^2-x_4^2=-1/\kappa \}.$$
We denote by $\langle \,  ,\,\rangle$ the Lorentzian metric induced from $g_0$ on $\adsk$. This metric, known as the canonical metric of $\mathbb H^3_1$, has constant sectional curvature $-\kappa <0$. 

Consider now the algebra $\B$ of paraquaternionic numbers over $\R$ generated by $\{1,i,j,k\}$, where $k=ij$, $-i^2=j^2=1$ and $ij=-ji$. This is an associative, non-commutative and unitary algebra over $\R$. An arbitrary paraquaternonic number is given by $q=x_1+x_2i+x_3j+x_4k$. The conjugate of $q$ is $\bar q=x_1-x_2i-x_3j-x_4k$ and the norm of $q$ is given by
$$
||q||^2=q\bar q = x_1^2+x_2^2-x_3^2-x_4^2.
$$
The scalar product induced by such norm on $\R^4$  is exactly $g_0$, and $\{1,i,j,k\}$ is a pseudo-orthonormal basis with $1,i$ spacelike and $j,k$ timelike. In terms of paraquaternionic numbers, we have
$$
\adsk=\{q\in \B: ||q||^2=-1/\kappa \}.
$$
The above presentation was used in  \cite{CP} to describe a covering map
from the anti-de Sitter space $\mathbb H_1 ^3 (\kappa/4)$ to the unit tangent sphere bundle of the (Riemannian) hyperbolic two-space 
$\mathbb H ^2(\kappa)$, which is embedded in $\mathbb R_1 ^3$ as
$$\mathbb H^2 (\kappa)=\{(x_1,x_2,x_3) \in \R_1 ^3 : x_1^2+x_2^2 - x_3^2=-1/\kappa, x_3 >0\}.
$$
Now consider each paraquaternonic number $q$ as a pair of complex number in the following way $q:=(z,w)=(x_1+ix_2,x_3+ix_4)$, we obtain the covering map
$$\label{effe}  
\begin{array}{rcl}
F : \mathbb H^3_1 (\kappa /4) &\to & T_1 \mathbb H ^2(\kappa) \\[4pt]
  (z,w)  &\mapsto & \Big( \dfrac{\sqrt{\kappa}}{4}\left(2 z\bar w , |z|^2 +|w|^2\right) ,  
	-\dfrac{\kappa}{4}\left(({z}^2 +{\bar w}^2), 2 {\rm Re}(z\, w) \right)   \Big). 
\end{array}
$$
The composition of $F$ with the canonical projection
\begin{equation}
\begin{array}{rcl}
\pi:& T_1 \mathbb H ^2(\kappa)&\to  \mathbb H ^2(\kappa), \\
 &(x,u)&\mapsto x
\nonumber
\end{array}
\end{equation}
yields the {\it  hyperbolic Hopf map} $h= \pi\circ F$, that is,
\begin{equation}\label{Hopf}  
\begin{array}{rcl}
    h:     \ads &\to & \mathbb{H}^2(\kappa)\\[4pt]
               (z,w) & \mapsto & \dfrac{\sqrt{\kappa}}{4}(2 z\bar{w}, |z|^2+|w|^2).
\end{array}
\end{equation}
In particular, $h$ is a submersion with geodesic fibers, which can be defined as orbits of the $\mathbb S^1$-action   $\big(e^{it}, (z,w)\big)\mapsto \big(e^{it} z,e^{it} w)$ on the anti-de Sitter space $\mathbb H_1 ^3 (\kappa/4)$.  The vector field
\begin{align*}
    X_1(q)= \frac{\sqrt{\kappa}}{2}  iq =\frac{\sqrt{\kappa}}{2} (-x_2,x_1,-x_4,x_3)
\end{align*}
is tangent to fibers of  $h$ and satisfies $\langle X_1, X_1 \rangle=-1$ and is called the \textit{hyperbolic Hopf vector field}, in analogy with the case of the Hopf vector field and the Hopf fibration. We also consider 
\begin{align*}
    X_2(q)= \frac{\sqrt{\kappa}}{2}  jq = \frac{\sqrt{\kappa}}{2} (x_3,-x_4,x_1,-x_2), \quad X_3(q)= \frac{\sqrt{\kappa}}{2} kq = \frac{\sqrt{\kappa}}{2}(x_4,x_3,x_2,x_1).
\end{align*}
Then, $\{X_1,X_2, X_3\}$ parallelize $\ads$ and  
\begin{align*}
    \langle X_2, X_2 \rangle=\langle X_3, X_3 \rangle =1, \qquad  \langle X_l, X_m \rangle =0, \quad \text{for} \; l\neq m.
\end{align*}
\section{Levi-Civita connection and curvature}
In \cite{CP}, the covering map $F$ described by \eqref{effe}  was used to introduce a  family of Lorentzian metrics on the anti-de Sitter space $\ads $. These metrics were referred to as {\em metrics of Kaluza-Klein type},  because they correspond in a natural way to metrics defined on the unit tangent sphere bundle $T_1 \mathbb H ^2 (\kappa) $. Their general description is given by
$$
g=-\rho\, \theta^1 \otimes \theta^1 + \mu \,\theta^2 \otimes \theta^2+\nu \,\theta^3 \otimes \theta^3 ,
$$
where $\rho,\mu,\nu$ are positive real numbers and $\{\theta^i \}$ denotes the basis of $1$-forms dual to $\{X_i\}$. 

Here we shall focus our attention on a special type of metrics of Kaluza-Klein type, requiring to have a deformation only in the direction of the hyperbolic Hopf vector field $X_1$, which in this case is a Killing vector field \cite{CP}. Up to homotheties, we may restrict to the case where $\mu=\nu=1$ and $\rho=\tq$ and consider the one-parameter family of metrics
\begin{align}\label{31}
    \gt=-\tq\, \theta^1 \otimes \theta^1 +\theta^2 \otimes \theta^2+\theta^3 \otimes \theta^3, \quad \tau > 0 .
\end{align}
These metrics can be clearly considered an hyperbolic analogue of {\em Berger metrics} on the sphere $\mathbb S^3$ and so we refer to them as {\em Berger-like metrics} on $\ads$.
Observe that with respect to the standard metric $\langle \,,\,\rangle$ on $\ads$, these metrics can be described as
\begin{align*}
    \gt(X,Y)=\langle X,Y \rangle  + (1-\tq) \langle X,X_1 \rangle \langle Y,X_1 \rangle.
\end{align*}
{ and } $\{ E_1=\tau^{-1} X_1, E_2=X_2, E_3=X_3 \}$ is a pseudo-orthonormal basis for $\gt$, with $E_1$ timelike and $E_2, E_3$ spacelike. Computing the Lie brackets $[E_i,E_j]$, we find
\begin{align*}
 [E_1,E_2]=-\frac{\sk}{\tau}\,E_3, \qquad 
[E_2,E_3]=\tau\sk\,  E_1 , \qquad
[E_1,E_3]=\frac{\sk}{\tau}\,E_2 .
\end{align*}
Then, by the Koszul formula, we obtain the description of the Levi-Civita connection of $\adsl=(\ads, \gt)$ with respect to $\{E_1,E_2,E_3\}$. Explicitly, we get 
\begin{equation}\label{LCivita}
\begin{array}{lll}
    \nt_{E_1} E_1=0,    \quad &   \nt_{E_1} E_2= \dfrac{\sk \,(\tq-2)}{2\tau} E_3,  \quad & \nt_{E_1} E_3=-\dfrac{\sk\,(\tq-2)}{2\tau} E_2, \\[7pt]
    \nt_{E_2} E_1=\dfrac{\tau\sk }{2}E_3,   \quad  &   \nt_{E_2}E_2 = 0,  \quad & \nt_{E_2} E_3=\dfrac{\tau\sk }{2}E_1, \\[7pt]
    \nt_{E_3} E_1=-\dfrac{\tau\sk}{2} E_2,  \quad  &   \nt_{E_3} E_2= -\dfrac{\tau\sk }{2} E_1, \quad  & \nt_{E_3} E_3=0.
    \end{array}
\end{equation}
Observe that $E_1$ is an unit timelike vector field tangent to the fibers of $h$ and the Levi-Civita connection satisfies the following geometric relation
\begin{equation}\label{nxe1}
    \nt_X E_1=- \tau \skm X \wedge E_1, 
\end{equation}
for any tangent vector field $X$, where $E_1 \wedge E_2=E_3$, $E_2 \wedge E_3=-E_1$ e $E_3 \wedge E_1=E_2$ completely determine the wedge product $U \wedge V$ (see \cite{Ra}).  We now consider the curvature tensor, taken with the sign convention
\begin{equation*}
    \Rt(X,Y)=[\nt_X, \nt_Y]-\nt_{[X,Y]}.
\end{equation*}
Using \eqref{LCivita} we {have that the non vanishing components of the Riemannian curvature tensor $\Rt$ are}
\begin{align}\label{Rt}
    \begin{split}
    \Rt(E_1,E_2)E_1=-\kq \tq E_2, \quad
    \Rt(E_1,E_3)E_1=-\kq \tq E_3, \quad
    \Rt(E_3,E_2)E_3=\kq (4-3\tq)E_2
    \end{split}
\end{align}
and we can prove the following result.

\begin{prop}\label{curvatura}
The curvature tensor of $\adsl$ is given by
\begin{equation}
\begin{split}
    \Rt(X,Y)Z=&\kq(3\tq-4)[\gt(Y,Z)X-\gt(X,Z)Y]\\
    &+\kappa(\tq-1)[\gt(Y,E_1)\gt(Z,E_1)X-\gt(X,E_1)\gt(Z,E_1)Y\\
    &\hspace{55pt}+\gt(Y,Z)\gt(X,E_1)E_1-\gt(X,Z)\gt(Y,E_1)E_1],
\end{split}
\end{equation}
for all tangent vector fields $X,Y,Z$.
\end{prop}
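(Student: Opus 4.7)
The plan is to exploit tensoriality: both sides of the claimed identity are $C^\infty(M)$-multilinear in $X$, $Y$, $Z$, so it suffices to verify the equality on the pseudo-orthonormal basis $\{E_1,E_2,E_3\}$. Moreover, both $R^\tau(X,Y)Z$ and the right-hand side are manifestly skew-symmetric in the pair $(X,Y)$, so we may restrict to triples $(E_i,E_j,E_k)$ with $i<j$ and $k\in\{1,2,3\}$, giving nine cases to check.

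For the left-hand side, first I would complete the list \eqref{Rt} by verifying that every other component $R^\tau(E_i,E_j)E_k$ with $i<j$ vanishes. This is a direct application of the definition $R^\tau(X,Y)Z=\nabla^\tau_X\nabla^\tau_YZ-\nabla^\tau_Y\nabla^\tau_XZ-\nabla^\tau_{[X,Y]}Z$ together with the Christoffel relations \eqref{LCivita} and the bracket formulas. The key observation that kills the extra terms is that each $\nabla^\tau_{E_i}E_j$ listed in \eqref{LCivita} is a multiple of the \emph{third} basis vector, so a second covariant derivative in the direction $E_i$ or $E_j$ typically produces a $\nabla^\tau_{E_i}E_i=0$ or $\nabla^\tau_{E_j}E_j=0$.

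For the right-hand side, denote it by $T(X,Y)Z$ and recall $g_\tau(E_1,E_1)=-1$, $g_\tau(E_2,E_2)=g_\tau(E_3,E_3)=1$, and $g_\tau(E_i,E_1)=-\delta_{i1}$. A short calculation then gives, for instance,
\begin{align*}
T(E_1,E_2)E_1 &= \tfrac{\kappa}{4}(3\tau^2-4)\bigl[0\cdot E_1-(-1)E_2\bigr]+\kappa(\tau^2-1)\bigl[0-(-1)(-1)E_2+0-0\bigr]\\
&= \Bigl[\tfrac{\kappa}{4}(3\tau^2-4)-\kappa(\tau^2-1)\Bigr]E_2 = -\tfrac{\kappa}{4}\tau^2\, E_2,
\end{align*}
matching \eqref{Rt}. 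The same pattern handles $(E_1,E_3,E_1)$ and $(E_2,E_3,E_3)$, while for the six triples where $R^\tau(E_i,E_j)E_k=0$, inspection shows that either both $g_\tau(Y,Z)$ and $g_\tau(X,Z)$ vanish in the first bracket, or the $E_1$-weighted terms in the second bracket cancel pairwise.

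The proof is essentially bookkeeping; the only genuine obstacle is sign discipline, since the timelike direction contributes $g_\tau(E_1,E_1)=-1$ and several of the coefficients of $\kappa/4$ and $\kappa$ must combine to $(-\tau^2)$ or $(4-3\tau^2)$ after cancellation. Once the nine verifications are collected, the proposition follows from the fact that a $(1,3)$-tensor on $M$ is uniquely determined by its values on any local frame.
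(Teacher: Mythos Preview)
Your strategy of exploiting tensoriality and checking on the frame is valid and more direct than the paper's argument, but there is a concrete factual error in your plan: it is \emph{not} true that the six remaining components $R^\tau(E_i,E_j)E_k$ with $i<j$ all vanish. The sentence preceding \eqref{Rt} means ``non-vanishing \emph{independent} components'', i.e.\ up to the curvature symmetries. Only three of your six triples actually give zero, namely $(1,2,3)$, $(1,3,2)$, $(2,3,1)$. The other three do not; for instance,
\[
R^\tau(E_1,E_2)E_2=-\nabla^\tau_{E_2}\nabla^\tau_{E_1}E_2-\nabla^\tau_{[E_1,E_2]}E_2
=-\tfrac{\kappa(\tau^2-2)}{4}E_1-\tfrac{\kappa}{2}E_1=-\tfrac{\kappa\tau^2}{4}\,E_1,
\]
and similarly $R^\tau(E_1,E_3)E_3=-\tfrac{\kappa\tau^2}{4}E_1$ and $R^\tau(E_2,E_3)E_2=\tfrac{\kappa}{4}(4-3\tau^2)E_3$. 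These are of course forced by the pair-exchange symmetry $g_\tau(R(X,Y)Z,W)=g_\tau(R(Z,W)X,Y)$ applied to the three components in \eqref{Rt}, but you must still check them against the right-hand side (they match; e.g.\ $T(E_1,E_2)E_2=\tfrac{\kappa}{4}(3\tau^2-4)E_1-\kappa(\tau^2-1)E_1=-\tfrac{\kappa\tau^2}{4}E_1$). With these three extra verifications your proof is complete; without them it is not, because your claimed reason for the vanishing (``a second covariant derivative \dots\ produces a $\nabla^\tau_{E_i}E_i=0$'') simply fails for these triples.

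For comparison, the paper organises the computation differently: it writes $X=\bar X+xE_1$ with $\bar X\perp E_1$ (and likewise for $Y,Z,W$), then observes from \eqref{Rt} that in $g_\tau(R^\tau(X,Y)Z,W)$ only the terms containing \emph{zero or two} occurrences of $E_1$ survive. The two-$E_1$ terms all reduce to $R^\tau(\bar X,E_1)E_1=\tfrac{\kappa}{4}\tau^2\bar X$, while the zero-$E_1$ term is a constant-curvature expression in $\bar X,\bar Y,\bar Z,\bar W$. This handles all frame checks simultaneously and explains structurally why the formula has a ``constant curvature plus $E_1$-correction'' shape; your case-by-case method is more elementary but requires getting the full list of non-zero components right.
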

\begin{proof}
Consider three arbitrary vector fields $X, Y, Z$ on $\adsl$ and their decompositions as 
\begin{align*}
    X=\bar{X}+xE_1, \quad Y=\bar{Y}+yE_1, \quad Z=\bar{Z}+zE_1,
\end{align*}
with $\bar{X}, \bar{Y}, \bar{Z}$ orthogonal to  $E_1$ and $x=\gt(X,E_1)$ and so on. Observe that using \eqref{Rt} we have that all terms of $\gt(\Rt(X,Y)Z,W)$ where 
$E_1$ occurs either one, three or four times necessarily vanish. Therefore, we have
\begin{align*}
\begin{split}
    \gt(\Rt(X,Y)Z,W)&=\gt(\Rt(\bar{X},\bar{Y})\bar{Z},\bar{W})\\
    &+yz\gt(\Rt(\bar{X},E_1)E_1,\bar{W})+xz\gt(\Rt(E_1,\bar{Y})E_1,\bar{W})\\
    &+wx\gt(\Rt(E_1,\bar{Y})\bar{Z},E_1)+wy\gt(\Rt(\bar{X},E_1)\bar{Z},E_1).
\end{split}
\end{align*}
From the decompositions of $\bar{X}$, $\bar{Y}$, $\bar{Z}$ and $\bar{W}$ follows that
\begin{align*}
    \begin{split}
        \gt(\Rt(\bar{X},\bar{Y})\bar{Z},\bar{W})=\kq(3\tq-4)(\gt(\bar{X},\bar{W})\gt(\bar{Y},\bar{Z})-\gt(\bar{X},\bar{Z})\gt(\bar{Y},\bar{W})).
    \end{split}
\end{align*}
Moreover, we have
\begin{align*}
    \Rt(\bar{X}, E_1)E_1=\kq\tq \bar{X},\qquad     \Rt(E_1,\bar{Y})E_1=-\kq\tq \bar{Y}.
\end{align*}
Thus, we conclude that 
\begin{align*}
  \gt(\Rt(X,Y)Z,W)=&\kq(3\tq-4)(\gt(\bar{X},\bar{W})\gt(\bar{Y},\bar{Z})-\gt(\bar{X},\bar{Z})\gt(\bar{Y},\bar{W}))\\
&+\kq\tq(yz\gt(\bar{X}, \bar{W})-xz\gt(\bar{Y},\bar{W})+ wx\gt(\bar{Y}, \bar{Z})-yw\gt(\bar{X},\bar{Z}))\\
=&\gt \Big(\kq(3\tq-4)[\gt(\bar{Y},\bar{Z})\bar{X}-\gt(\bar{X},\bar{Z})\bar{Y}]\\
&+\kq\tq[\gt(Y, E_1)\gt(Z, E_1)\bar{X}-\gt(X,E_1)\gt(Z,E_1)\bar{Y}\\
&+\gt(X,E_1)\gt(\bar{Y}, \bar{Z})E_1-\gt(Y,E_1)\gt(\bar{X},\bar{Z})E_1], \bar{W}\Big)\\
=&\gt\Big (\kq(3\tq-4)[\gt({Y},{Z}){X}-\gt({X},{Z}){Y}]\\
&+\kappa(\tq-1)[\gt(Y, E_1)\gt(Z, E_1){X}-\gt(X,E_1)\gt(Z,E_1){Y}\\
&+\gt(X,E_1)\gt({Y},{Z})E_1-\gt(Y,E_1)\gt({X},{Z})E_1], W \Big ),
\end{align*}
which ends the proof since $W$ is arbitrary.
\end{proof}
We end this section describing the isometries of $\adsl$. Following the idea used in \cite{MOP} and \cite{OPP}, we observe that the isometry group of $\adsl$ is the four-dimensional indefinite unitary group $\mathrm{U}_1(2)$, that can be identified with
$$
\mathrm{U}_1(2)=\{A\in \mathrm{O}_2(4)\vert AJ_1=\pm J_1A\},
$$
where $J_1$ is the complex structure of $\r^4$  corresponding to $i$, i.e., defined by
\begin{equation}\label{J_1}
J_{1} =\begin{pmatrix}
 0&-1 & 0 & 0 \\
 1&0 & 0 & 0 \\
 0& 0 & 0 & -1 \\
  0 &0 & 1 & 0 \\
 \end{pmatrix}
\end{equation}
while
$$
\mathrm{O}_2(4)=\{A\in \mathrm{GL}(4,\mathbb{R})\vert A^t=\epsilon\,A^{-1}\,\epsilon\},\qquad \epsilon=\begin{pmatrix}I&0\\0&-I\end{pmatrix},\quad I=\begin{pmatrix}1&0\\0&1\end{pmatrix},
$$
is the pseudo-orthogonal group, i.e., the group of $4\times 4$ real matrices preserving the semi-definite inner product of $\r_2^4$.

We now consider a $1$-parameter family $A(v), v\in(a,b)\subset\mathbb{R}$, consisting of $4\times 4$ pseudo-orthogonal matrices commuting (anticommuting, respectively) with $J_1$. In order to describe explicitly the family $A(v)$, we consider the two product structures $J_2$ and $J_3$ of $\r^4$ corresponding to $j$ and $k$ respectively, that is,
$$
 { J_{2} =\begin{pmatrix}
 0&0 & 1 &0 \\
 0&0 & 0 & -1 \\
 1&0& 0 & 0 \\
0&-1 & 0 & 0 \\
 \end{pmatrix},\qquad
 J_{3} =\begin{pmatrix}
 0&0 & 0 & 1 \\
 0&0 & 1 & 0 \\
 0&1& 0 & 0 \\
  1&0 & 0 & 0 \\
 \end{pmatrix}.}
 $$
Since $A(v)$ is a pseudo-orthogonal matrix, the first row must be a unit vector ${\mathbf r}_1(v)$ of $\r^4_2$ for all $v\in(a,b)$. Thus, without loss of generality, we can take
$$
{\mathbf r}_1(v)=(\cosh\xi_1(v)\cos\xi_2(v), -\cosh\xi_1(v)\sin\xi_2(v), \sinh\xi_1(v)\cos\xi_3(v),-\sinh\xi_1(v)\sin\xi_3(v)),
$$
for some real functions $\xi_1,\xi_2$ and $\xi_3$ defined in $(a,b)$. Since $A(v)$ commutes  (anticommutes, respectively) with $J_1$ the second row of $A(v)$ must be ${\mathbf r}_2(v)=\pm J_{1}{\mathbf r}_1(v)$. Now, the four vectors $\{{\mathbf r}_1, J_1{\mathbf r}_1, J_2{\mathbf r}_1,J_3{\mathbf r}_1\}$ form a pseudo-orthonormal basis of $\r^4_2$, thus the third row ${\mathbf r}_3(v)$ of $A(v)$ must be a linear combination of them. Since ${\mathbf r}_3(v)$ is unit and it is orthogonal to both ${\mathbf r}_1(v)$ and $J_1{\mathbf r}_1(v)$, there exists a function $\xi(v)$ such that
$$
 {{\mathbf r}_3(v)=\sin\xi(v) J_2 {\mathbf r}_1(v)+\cos\xi(v) J_3 {\mathbf r}_1(v).}
$$
Finally, the fourth row of $A(v)$ is ${ {\mathbf r}_4(v)=\pm J_1{\mathbf r}_3(v)=\mp\sin\xi(v) J_2 {\mathbf r}_1(v)\pm\cos\xi(v) J_3 {\mathbf r}_1(v)}$.
This means that any $1$-parameter family $A(v)$ of $4\times 4$ pseudo-orthogonal matrices commuting (anticommuting, respectively) with $J_1$ can be described by four functions $\xi_1,\xi_2,\xi_3$ and $\xi$ as
\begin{equation}\label{eq-descrizione-A}
A(\xi,\xi_1,\xi_2,\xi_3)(v)=
\begin{pmatrix}
{\mathbf r}_1(v)\\
\pm J_1{\mathbf r}_1(v)\\
 {\sin\xi(v) J_2 {\mathbf r}_1(v)+\cos\xi(v) J_3 {\mathbf r}_1(v)}\\
 {\mp\sin\xi(v) J_2 {\mathbf r}_1(v)\pm\cos\xi(v) J_3 {\mathbf r}_1(v)}
\end{pmatrix}.
\end{equation}

\section{Structure equations for surfaces in $\adsl$}

Consider a pseudo-Riemannian oriented surface $M$ immersed in $\adsl$. Let $N$ denote the unit vector field normal to $M$ in 
$\adsl$. We set $\lm:=\gt(N,N)$, so that $M$ is spacelike if $\lm=-1$ and timelike if $\lm=1$.

{In the following, we compute} the Gauss and Codazzi equations for $M$, using the metric induced by $\gt$ on $M$, the shape operator $A$, the tangent projection of $E_1$ on $M$ and the \textit{angle function} $\nu:=\gt(N,E_1)\gt(N,N)=\lm\gt(N,E_1)$. The vector field $E_1$ decomposes as
\begin{align*}
    E_1=T+\nu N ,
\end{align*}
where $T$ is tangent to $M$, whence,
\begin{align}\label{gt}
    \gt(T,T)=-(1+\lm\nq).
\end{align}
{Denoting by $X$ and $Y$ two} vector fields tangent to $M$ {we have}, 
$$
	\nt_XY=\nabla_XY+\alpha(X,Y), \qquad \nt_X N=-A(X),
$$
where $\nabla$ is the Levi-Civita connection of $M$ and $\alpha$ the second fundamental form with respect to the immersion in $\adsl$. Thus, we conclude that
\begin{align*}
    \alpha(X,Y)=\lm\,\gt(\alpha(X,Y), N)\, N=\lm\,\gt(\nt_XY,N)N =-\lm\,\gt(Y,\nt_XN)N=\lm\,\gt(Y,A(X))N.
\end{align*}
Observe that
\begin{equation}
    \begin{split}
        \nt_XE_1=&\nt_XT+X(\nu)N+\nu\nt_XN\\
        =&\nabla_XT+\alpha(X,T)+X(\nu)N-\nu A(X)\\
        =&\nabla_XT+\lm\,\gt(T,A(X))N+X(\nu)N-\nu A(X). 
    \end{split}
\end{equation}
On the other hand, by \eqref{nxe1} we have
\begin{equation}
    \begin{split}
        \nt_XE_1=&-\skm\tau X \wedge E_1=-\skm\tau X \wedge (T+\nu N)\\
        =&-\skm\tau (X\wedge T)-\skm\nu \tau (X\wedge N)\\
        =&-\skm\lm\tau \gt(X\wedge T, N)N+\skm\nu \tau (N\wedge X)\\
        =&-\skm\lm\tau \gt(JX, T)N+\skm\nu \tau\, JX,
    \end{split}
\end{equation}
where $JX:=N\wedge X$ satisfies
\begin{equation}\label{JX}
\gt\big(JX,JY\big)=-\lambda\,\gt(X,Y), \qquad J^2X=\lambda\, X.
\end{equation} 
Comparing the above expressions for $\nt_XE_1$, we find 
\begin{equation}\label{Xna}
\left\{\begin{aligned}
    \nabla_XT&=\nu\,(A(X)+\skm\tau JX), \\
    X(\nu)&=-\lm \,\gt(A(X)+\skm \tau JX, T).
    \end{aligned}
    \right.
\end{equation}
We now prove the following.

\begin{prop}\label{gc}
{Denoting by $X$ and $Y$ two} vector fields tangent to $M$, with $K$ the Gaussian curvature of $M$ and with $\bar{K}$ the sectional curvature in $\adsl$ of the plane tangent to $M$, we have
\begin{equation}\label{kk}
    K=\bar{K}+\lm\, \mathrm{det}\, A=-\kq\tq +\lm\,[\mathrm{det}\,A+\kappa\nu^2(\,1-\tq)]
\end{equation}
and
\begin{equation}\label{Cod}
  \nabla_XA(Y)-\nabla_YA(X)-A[X,Y]=-\kappa \lm\nu(1-\tq)\,[\gt(X,T)Y-\gt(Y,T)X].
\end{equation}
\end{prop}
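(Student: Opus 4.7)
The plan is to derive both formulas from the general Gauss--Codazzi identities for a pseudo-Riemannian hypersurface, using the explicit form of the ambient curvature provided by Proposition~\ref{curvatura}.

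For the Gauss equation~\eqref{kk}, I would start from the classical identity $K=\bar{K}+\lm\det A$ for a hypersurface with unit normal $N$ of causal type $\gt(N,N)=\lm$, so that it remains to compute the ambient sectional curvature $\bar K$ of the tangent plane $T_pM$. Choose a pseudo-orthonormal tangent frame $\{e_1,e_2\}$ with $\gt(e_i,e_i)=\epsilon_i$, so $\epsilon_1\epsilon_2=-\lm$. Substituting $X=W=e_1$, $Y=Z=e_2$ into the formula of Proposition~\ref{curvatura}, the isotropic term yields $\kq(3\tq-4)\,\epsilon_1\epsilon_2$, while the remaining piece collapses to $\kappa(\tq-1)\bigl[\epsilon_1\gt(e_2,E_1)^2+\epsilon_2\gt(e_1,E_1)^2\bigr]$. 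Since $E_1=T+\nu N$ and $e_i\perp N$, one may replace $E_1$ by $T$ in these scalar products, and a case-check for $\lm=\pm 1$ establishes the uniform identity $\epsilon_1\gt(e_2,T)^2+\epsilon_2\gt(e_1,T)^2=-\lm\,\gt(T,T)$. Dividing by $\epsilon_1\epsilon_2=-\lm$ and substituting $\gt(T,T)=-(1+\lm\nq)$ from~\eqref{gt} gives $\bar K=-\kq\tq+\lm\kappa\nq(1-\tq)$; inserting this into $K=\bar K+\lm\det A$ produces exactly~\eqref{kk}.

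For the Codazzi equation~\eqref{Cod}, I would use the classical identity
\begin{equation*}
\nabla_X A(Y)-\nabla_Y A(X)-A[X,Y]=-(\Rt(X,Y)N)^{T},
\end{equation*}
derived by differentiating $\nt_X N=-A(X)$ via the Gauss formula $\nt_X Y=\nabla_X Y+\lm\gt(A(X),Y)N$ and equating tangent components (the normal contributions drop out by self-adjointness of $A$). Applying Proposition~\ref{curvatura} with $Z=N$ and $X,Y$ tangent, the first, purely metric term vanishes since $\gt(X,N)=\gt(Y,N)=0$, and only the anisotropic piece survives:
\begin{equation*}
\Rt(X,Y)N=\kappa(\tq-1)\,\gt(N,E_1)\bigl[\gt(Y,E_1)X-\gt(X,E_1)Y\bigr].
\end{equation*}
Using $\gt(N,E_1)=\lm\nu$ from the definition of $\nu$ together with $\gt(X,E_1)=\gt(X,T)$ and $\gt(Y,E_1)=\gt(Y,T)$, substituting and flipping sign recovers exactly~\eqref{Cod}.

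The main obstacle I anticipate is the sign bookkeeping in the Gauss step, specifically verifying the uniform identity $\epsilon_1\gt(e_2,T)^2+\epsilon_2\gt(e_1,T)^2=-\lm\,\gt(T,T)$ across the two causal regimes (where the signature of the tangent frame changes); once this is in place, both identities reduce to a direct substitution into Proposition~\ref{curvatura}.
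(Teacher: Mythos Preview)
Your proposal is correct and follows essentially the same route as the paper: both proofs invoke the classical Gauss--Codazzi identities for a nondegenerate hypersurface and evaluate the ambient curvature via Proposition~\ref{curvatura}, reducing the Gauss part to the identity $\gt(T,T)=-(1+\lm\nq)$ and the Codazzi part to the vanishing of the isotropic terms in $\Rt(X,Y)N$. The only cosmetic difference is that the paper fixes the frame with $\gt(X,X)=1$, $\gt(Y,Y)=-\lm$, whereas you keep general $\epsilon_i$; your anticipated ``sign obstacle'' is in fact immediate from the expansion $\gt(T,T)=\epsilon_1\gt(T,e_1)^2+\epsilon_2\gt(T,e_2)^2$ together with $\epsilon_1\epsilon_2=-\lm$, so no case split is needed.
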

\begin{proof}
Recall that for a pseudo-Riemannian surface, one has
\begin{align*}
     K=\bar{K}+\lm\, \dfrac{\gt(A(X),X)\gt(A(Y),Y)-\gt(A(X),Y)^2}{\gt(X,X)\gt(Y,Y)-\gt(X,Y)^2}.
\end{align*}
Consider a local (pseudo-)orthonormal basis $\{ X,Y\}$ on $M$, i.e., $\gt(X,X)=1$, $\gt(X,Y)=0$ and $\gt(Y,Y)=-\lm$. Then, by Proposition~\ref{curvatura}, we have
\begin{align*}
    \Rt(X,Y)Y=&\kq(3\tq-4)[\gt(Y,Y)X-\gt(X,Y)Y]\\
    &+\kappa\,(\tq-1)[\gt(Y,E_1)\gt(Y,E_1)X-\gt(X,E_1)\gt(Y,E_1)Y\\
    &+\gt(Y,Y)\gt(X,E_1)E_1-\gt(X,Y)\gt(Y,E_1)E_1]
\end{align*}
and so,
\begin{align*}
    \gt(\Rt(X,Y)Y,X)=&-\kq\lm(3\tq-4)+\kappa(\tq-1)[\gt(Y,E_1)^2-\lm\gt(X,E_1)^2]\\
    =&-\kq\lm (3\tq-4)+\kappa(\tq-1)[\gt(Y,T)^2-\lm\gt(X,T)^2].
\end{align*}
Taking into account $\gt(X,T)^2-\lm\gt(Y,T)^2=\gt(T,T)=-(1+\lm\nq)$, we have
\begin{align*}
    \Rt(X,Y,Y,X)=-\kq\lm(3\tq-4)+\kappa\lm(\tq-1)(1+\lm\nq)
\end{align*}
that gives
$$
    \bar{K}=-\lm\Rt(X,Y,Y,X)=\kq(3\tq-4)-\kappa(\tq-1)(1+\lm\nq)=-\kq\tq+\kappa\lm\nq(1-\tq),
$$
whence equation \eqref{kk} follows.

Consider the Codazzi equation
$$
    \gt(\Rt(X,Y)Z,N)=\gt(\nabla_XA(Y)-\nabla_YA(X)-A[X,Y],Z).
$$
From Proposition \ref{curvatura}, we have
\begin{align*}
    \Rt(X,Y)N=&\kappa\,(\tq-1)\lm\nu[\gt(Y,E_1)X-\gt(X,E_1)Y]\\
    =& \kappa \lm\nu \,(\tq-1)[\gt(Y,T)X-\gt(X,T)Y],
\end{align*}
that leads equation \eqref{Cod} by the arbitrarity of $Z$.
\end{proof}

\section{Basic properties of helix surfaces in $\adsl$}

We start with the following.
\begin{defn}
Let  $M$ be an oriented pseudo-Riemannian surface immersed in $\adsl$ and $N$ the unit vector field normal to $M$, with $\gt(N,N)=\lm$.  The surface
$M$ is called a \textit{helix surface} (or \textit{a constant angle surface}) if the angle function $\nu=\lm\,\gt(N,E_1)$ is constant on  $M$.
\end{defn}

\begin{oss}\label{nun0}
If $M$ is spacelike (respectively, timelike), then $T$ is spacelike (respectively, timelike) and $JT$ is spacelike. 
	From \eqref{gt} and \eqref{JX} we get 
	$$
	\gt( T,T)=-(1+\lambda \, \nu^2), \qquad \gt( JT,JT)=\lambda+\nu^2>0,\qquad \gt( T,JT)=0.
	$$ 
Observe that in the case $\lm=1$,  if $\nu=0$ then $E_1$ is tangent to $M$ at each point. Therefore, $M$ is a  \textit{Hopf tube}.
On the other hand, if $\lm=-1$,  then $|\nu|>1$ and so $\nu\neq 0$.
\end{oss}
Taking into account Remark~\ref{nun0} from now on we assume $\nu\neq 0$. 

\begin{prop}\label{p4}
Let $M$ be a helix surface in $\adsl$ and $N$ the unit vector field normal to $M$. Then {the following hold:}
\begin{itemize}
    \item [(i)] with respect to the tangent basis  $\{T, JT\}$, the matrix of the shape operator is given by
   $$
        A=    \begin{pmatrix}
        0& -\dfrac{\sqrt{\kappa}}{2}\lm\tau\\[4pt]
       \dfrac{\sqrt{\kappa}}{2}\tau  & \mu
    \end{pmatrix},
   $$
		for some smooth function $\mu$ on $M$;
    \item[(ii)] the Levi-Civita connection $\nabla$ of $M$ is described by
		$$
		\begin{array}{ll}
        \nabla_TT=\nu\tau\sk \,JT, &  \nabla_{JT}T=\mu\nu \,JT,\\[4pt]
        \nabla_TJT=\lm\nu\tau \sk\, T, &  \nabla_{JT}JT=\lm\mu\nu \,T;
    \end{array}  
		$$
   \item[(iii)] the Gaussian curvature of $M$ is given by
	  $$
        K=\lm \kappa \nq (1-\tq);
$$
    \item[(iv)] the function $\mu$ satisfies equation
    \begin{equation}\label{star}
        T(\mu)+\nu\mu^2+\kappa\nu B=0,
    \end{equation}
    where $B:=\nq(\tq-1)-\lm$.
\end{itemize}
\end{prop}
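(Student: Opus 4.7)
The plan is to exploit the constant-angle hypothesis, which forces $X(\nu)=0$ for every tangent vector field $X$, and then feed this systematically into the structure equations \eqref{Xna}, the self-adjointness of $A$, the Gauss formula \eqref{kk} and the Codazzi equation \eqref{Cod} from Proposition~\ref{gc}.

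\textbf{Step 1 (part (i)).} From the second line of \eqref{Xna}, $\nu$ being constant gives $\gt(A(X)+\skm\tau JX,T)=0$ for every tangent $X$; hence $A(X)+\skm\tau JX$ is $\gt$-orthogonal to $T$. Using Remark~\ref{nun0} we have $\gt(JT,JT)=\lm+\nq>0$, so $\{T,JT\}$ is a basis of $TM$ and $T^\perp$ inside $TM$ is spanned by $JT$. Hence
\[
A(T)=\Big(c_{1}-\skm\tau\Big)JT,\qquad A(JT)=c_{2}\,JT-\skm\tau\lm T,
\]
where we used $J^{2}T=\lm T$ from \eqref{JX}. The symmetry $\gt(A(T),JT)=\gt(T,A(JT))$, combined with $\gt(T,T)=-(1+\lm\nq)$ and the identity $\lm(\lm+\nq)=1+\lm\nq$, forces $c_{1}=\sk\,\tau$, leaving $c_{2}=:\mu$ a free smooth function. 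This produces the matrix in (i).

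\textbf{Step 2 (part (ii)).} Plug (i) into the first line of \eqref{Xna} to obtain $\nabla_{T}T=\nu\sk\,\tau\,JT$ and $\nabla_{JT}T=\nu\mu\,JT$ directly. For the remaining two Christoffel-type identities, differentiate the two relations $\gt(T,JT)=0$ and $\gt(JT,JT)=\lm+\nq=\mathrm{const}$ along $T$ and along $JT$; this gives $\gt(\nabla_{T}JT,JT)=\gt(\nabla_{JT}JT,JT)=0$, so both $\nabla_{T}JT$ and $\nabla_{JT}JT$ are multiples of $T$, and their coefficients are determined by the first two identities together with $\lm(\lm+\nq)/(1+\lm\nq)=\lm$.

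\textbf{Step 3 (part (iii)).} From the matrix in (i) one reads off $\det A=\lm\,\kq\,\tq$. Substituting into \eqref{kk} the terms $-\kq\tq$ and $\lm\det A=\kq\tq$ cancel, and what remains is $\lm\kappa\nq(1-\tq)$, which is (iii).

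\textbf{Step 4 (part (iv)).} Apply the Codazzi identity \eqref{Cod} to the pair $(X,Y)=(T,JT)$. The right-hand side collapses to $\kappa\nu(1-\tq)(\lm+\nq)\,JT$ after using $\gt(T,T)=-(1+\lm\nq)$ and $\lm(1+\lm\nq)=\lm+\nq$. The left-hand side is computed from (i) and (ii): compute $[T,JT]=\nabla_{T}JT-\nabla_{JT}T=\lm\nu\sk\tau\,T-\nu\mu\,JT$, then expand $\nabla_{T}A(JT)$, $\nabla_{JT}A(T)$ and $A[T,JT]$ using the formulas for $A$ and $\nabla$. The $T$-components cancel, and the $JT$-components collect into $[T(\mu)+\nu\mu^{2}-\kappa\lm\nu\tq]\,JT$. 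Equating coefficients and rewriting
\[
\kappa\lm\nu\tq+\kappa\nu(1-\tq)(\lm+\nq)=\kappa\nu\bigl[\lm+\nq(1-\tq)\bigr]=-\kappa\nu B
\]
yields the ODE \eqref{star} exactly.

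The only potentially delicate point is the bookkeeping in Step~4, where several terms carrying $\sk$, $\tau$, $\lm$ and $\mu$ must cancel cleanly between $\nabla_{T}A(JT)-\nabla_{JT}A(T)$ and $A[T,JT]$; all other steps are essentially direct substitution once (i) is in hand.
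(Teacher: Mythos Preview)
Your proof is correct and follows essentially the same route as the paper's own argument: both use the second equation in \eqref{Xna} with $\nu$ constant to identify the shape operator (you phrase it as ``$A(X)+\skm\tau JX\perp T$'' and then invoke self-adjointness, while the paper computes $\gt(A(T),T)$ and $\gt(A(JT),T)$ directly, but this is the same calculation), then feed (i) into \eqref{Xna} and metric compatibility for (ii), into \eqref{kk} for (iii), and into the Codazzi equation \eqref{Cod} applied to $(T,JT)$ for (iv). Your bookkeeping in Step~4 matches the paper's: the $T$-components cancel, the $JT$-components give $T(\mu)+\nu\mu^{2}-\kappa\lm\nu\tq$, and the identification with the Codazzi right-hand side yields \eqref{star}.
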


\begin{proof}
Taking into account Remark~\ref{nun0} {for the tangent basis $\{T, JT \}$}  we have
\begin{align*}
    \gt(A(T),T)&=-\skm \tau \gt(J T,T)-\lm\, T(\nu)=0;\\
    \gt(A(J T),T)&=-\lm \, J T(\nu)-\skm \tau\gt(J^2T,T)=-\skm \lm\tau\gt(T,T),
\end{align*}
which, by the symmetry of the shape operator, yields  \textit{(i)}.

For the Levi-Civita connection $\nabla$ of $M$, {using~\eqref{Xna},} we have
\begin{align*}
    \nabla_TT&=\nu\Big(A(T)+\skm\tau \, JT\Big)=\sk\nu\tau\, JT;\\
    \nabla_{JT}T&=\nu \Big(A(JT)+\skm\tau  \, J^2T \Big)=\nu\mu\, JT,
\end{align*}
and using the compatibility of $\nabla$ with the metric $\gt$, follows
\begin{align*}
     \gt(\nabla_T JT, T)=-\gt(JT, \nabla_T T),\qquad \gt(\nabla_{JT} JT, T)=-\gt(JT, \nabla_{JT} T).
\end{align*}
So,
\begin{align*}
    \nabla_T JT =\sk\lm\nu \tau\, T, \qquad    \nabla_{JT} JT=\lm\nu\mu\, T.
\end{align*}

From \eqref{kk} we have that Gaussian curvature is given by
\begin{align*}
    K& =-\kq\tq+\lm\, [\mathrm{det} A+\kappa\nq(1-\tq)]\\
    &=\lm\kappa\nq\,(1-\tq).
\end{align*}

\begin{oss}\label{rem52}
It is worthwhile to remark that the Gaussian curvature $K$ is a constant, which depends on the causal character of the surface and on the sign of $(1-\tq)$ and it can assume any real value. In the special case of $\tau=1$ the Gaussian curvature vanishes and helix surfaces for the standard metric on the anti-de Sitter space are flat, coherently with the results obtained in \cite{LO}  for surfaces forming a constant angle with an unit Killing vector field.
\end{oss}

Finally, we calculate
\begin{align*}
    \nabla_TA(JT)&=-\frac{\kappa}{2}\lm\tq\nu \, JT+T(\mu)\, JT+ \sk\lm\tau\mu\nu\, T,\\
    \nabla_{JT} A(T)&=\frac{\sk}{2}\lm\tau \nu\mu \,T,\\
    A[T,JT]&=A(\nabla_TJT-\nabla_{JT}T)=\frac{\sk}{2}\lm\tau\nu\mu\, T-\mu^2\nu\, JT+\frac{\kappa}{2}\lm\tq\nu\, JT.
\end{align*}
By Proposition \ref{gc}, we  get
\begin{align*}
    \nabla_TA(JT)-\nabla_{JT}A(T)-A[T,JT]=&\kappa(\tq-1)\lm\nu(\gt(T,T)JT)\\
    =&-\kappa(\tq-1)\lm\nu(1+\lm\nq)JT
\end{align*}
and so,  by comparing,
\begin{align*}
    T(\mu)+\mu^2\nu-\kappa\lm\nu[1-\lm\nq(\tq-1)]=0
\end{align*}
which ends the proof.
\end{proof}

\noindent
We recall that $E_1$ is a timelike vector field and $\gt(E_1,N)=\nu\lm$. Thus, there exists a smooth function $\varphi$ on $M$, such that 
\begin{align*}
    N=-\nu\lm\, E_1+\rad \cos\varphi\, E_2 +\rad \sin\varphi\, E_3,
\end{align*}
whence,
\begin{align*}
    T=E_1-\nu N=(1+\nq\lm)\, E_1-\nu\rad \cos\varphi\, E_2 -\nu\rad \sin\varphi\, E_3
\end{align*}
and we can calculate 
\begin{align*}
    JT=N \wedge T=N\wedge (E_1-\nu N)=N\wedge E_1=\rad\,(\sin \varphi\, E_2-\cos\varphi\, E_3).
\end{align*}
Moreover, we have
\begin{align*}
    \nt_TE_1=&-\skm\tau\, T\wedge E_1=\skm\nu  \tau\, JT,\\[7pt]
    \nt_TE_2=&\skm (1+\nq\lm) \frac{\tq-2}{\tau}\,E_3+ \nu \tau\radkm  \sin \varphi\, E_1,\\[7pt]
     \nt_TE_3=&-\skm(1+\nq\lm)\frac{\tq-2}{\tau}\,E_2-\nu\radkm\tau \sin \varphi\, E_1.
\end{align*}
Therefore, 
$$
\begin{aligned}
    A(T) =&-\nt_T N  \\
	 	= &\nu\lm \nt_T E_1 -\rad \,[-\sin\varphi \,T(\varphi)\,E_2+\cos\varphi \,\nt_T E_2\\
		&+\cos\varphi \,T(\varphi)\, E_3+\sin \varphi\, \nt_T E_3]\\
    =&\skm\lm\nq \tau \,JT+T(\varphi)\,JT+\skm(1+\nq\lm) \,\frac{\tq-2}{\tau}\,JT.
\end{aligned}
$$
But $A(T)=\dskm \tau JT$ and so, we conclude that
\begin{align*}
\skm\lm\nq\tau +T(\varphi)+\skm(1+\lm\nq)\frac{\tq-2}{\tau}=\skm \tau,
\end{align*}
that is,
\begin{align*}
  T(\varphi)=-\frac{\sk}{\lm \tau}B.
\end{align*}
Moreover,
\begin{align*}
    &\nt_{JT}E_1=\rad(\sin\varphi \nt_{E_2} E_1-\cos\varphi \nt_{E_3}E_1)=\radkm\tau(\sin\varphi\, E_3+\cos\varphi\, E_2),\\[4pt]
    &\nt_{JT}E_2=-\rad \cos\varphi \nt_{E_3}E_2=\radkm\tau \cos\varphi\, E_1,\\[4pt]
     &\nt_{JT}E_3=\rad\tau \sin \varphi \nt_{E_2}E_3=\radkm\tau \sin \varphi\, E_1
\end{align*}
and so,
$$
\begin{aligned}
        A(JT)=&-\nt_{JT} N \\
			=&\nu\lm \nt_{JT} E_1-\rad\, [-\sin\varphi\, JT(\varphi)E_2+\cos\varphi \nt_{JT} E_2
    \\&+\cos\varphi\, JT(\varphi)\, E_3+\sin \varphi \nt_{JT} E_3]\\
    =&JT(\varphi)\,JT-\skm\lm \tau\, T.
\end{aligned}
$$
But $A(JT)=-\dskm\tau\lm\, T+\mu\, JT$, so that we obtain
\begin{align*}
    JT(\varphi)=\mu.
\end{align*}
In this way, we determined the following system 
$$
\left\{
\begin{aligned}   
   T(\varphi)&=-\dfrac{\sk}{\lm \tau}B,  \\[4pt]
    JT(\varphi)&=\mu,
\end{aligned}
\right.
$$
whose compatibility condition is given by 
\begin{align*}
    [T,JT]=\nabla_TJT-\nabla_{JT}T=\sk\nu\lm\tau\, T-\mu\nu\, JT.
\end{align*}
Recalling that 
\begin{align*}
    [T,JT](\varphi)=T(\mu),
\end{align*}
by comparison we obtain
\begin{align*}
    T(\mu)+\mu^2\nu+\kappa\nu B=0,
\end{align*}
that is, equation \eqref{star}, which we already established, so that the system is  compatible.

We can choose a system of local coordinates $(x,y)$ on $M$, such that
\begin{equation}\label{localsystem}
\left \{
\begin{aligned}
    \partial_x&=T,\\
    \partial_y&=a\,T+b\,JT,
\end{aligned}
\right.
\end{equation}
for some smooth functions $a=a(x,y)$, $b=b(x,y)$ on  $M$. Requiring that $[\partial_x, \partial_y]=0$, we get
$$
\left \{
\begin{aligned}
    a_x=&-\sk\lm\nu\tau b ,\\[3pt]
    b_x=&b\mu\nu.
    \end{aligned}
    \right .
    $$
So, equation \eqref{star} can be rewritten as follows:
\begin{equation}\label{diff}
    \mu_x=-\nu(\kappa B+\mu^2).
\end{equation}
In order to integrate equation \eqref{diff}, we need to consider separately the following cases
\begin{itemize}
    \item [(i)] If $B>0 $, we find
		\begin{align*}
   \mu(x,y)=\sqrt{\kappa B}\tan(\eta(y)-\nu\sqrt{\kappa B} \,x).
\end{align*}
    \item[(ii)] If $B=0 $, we have 
		\begin{align*}
   \mu(x,y)=\frac{1}{\nu x+\eta(y)}.
\end{align*}
    \item [(iii)]If $B<0 $, we get 
		\begin{align*}
   \mu(x,y)=\sqrt{-\kappa B}\tanh(\eta(y)+\nu\sqrt{-\kappa B}\,x). 
\end{align*}
\end{itemize}
In all the above cases,  $\eta(y)$ is an arbitrary smooth function.

As we are interested in only one coordinate system $(x,y)$ on the surface $M$, we only need one admissible solution for $a$ and $b$ in each case.

\subsection*{CASE $B>0$} Since $b_x=b\mu\nu$, we deduce
\begin{align*}
   b_x=b\sqrt{\kappa B}\tan(\eta(y)-\nu \sqrt{\kappa B}\, x)\,\nu ,
\end{align*}
which admits as solution $b=\cos(\eta(y)-\nu\sqrt{\kappa B}\, x)$. Then, we have
\begin{align*}
 a_x=-\lm\nu\sk\tau b  =-\sk\lm\nu\tau \cos(\eta(y)-\nu\sqrt{\kappa B}\, x)
\end{align*}
and so, we can take $a=\dfrac{\lm \tau}{\sqrt{B}}\sin(\eta(y)-\nu\sqrt{\kappa B}\, x)$. Now, from
$$
\left \{
\begin{aligned} 
    \varphi_x&=-\frac{\sk}{\lm\tau}B,\\[4pt]
    \varphi_y&=-\frac{a\sk}{\lm \tau}B+b\mu=0,   
\end{aligned}
\right .
$$
we get  $\varphi(x,y)=-\dfrac{\sk}{\lm\tau}Bx+c$, for some real constant $c$.

\subsection*{CASE $B=0$} From $b_x=b\mu\nu$, we now have
\begin{align*}
    b_x=b\frac{1}{\nu x+\eta(y)}\nu
\end{align*}
and a solution is given by $b=\nu x+\eta(y)$. Moreover, we have:
\begin{align*}
    a_x=-\sk\,\lm\nu\tau b=-\sk\,\lm\nu^2\tau x+\eta(y),
\end{align*}
which holds for $a=-\nu\skm\,\lm \tau  x (\nu x + 2\eta(y))$. Then,
$$
\left\{
\begin{aligned}   
    \varphi_x=&-\dfrac{\sk}{\lm\tau}B=0,\\[4pt]
    \varphi_y=&-\dfrac{a\sk}{\lm \tau}B+b\mu=1, 
\end{aligned}
\right .
$$
whence,  $\varphi(x,y)=y+c$ for some real constant $c$.

\subsection*{CASE $B<0$} Recalling that $b_x=b\mu\nu$, we have
\begin{align*}
    b_x=b\sqrt{-\kappa B}\tanh(\eta(y)+\nu\sqrt{-\kappa B} \,x)\,\nu,
\end{align*}
which is satisfied by $b=\cosh(\eta(y)+\nu\sqrt{-\kappa B}\, x)$. Moreover, 
we find:
\begin{align*}
    a_x=-\lm\nu\sk\,\tau b=-\lm\nu\sk\,\tau \cosh(\eta(y)+\nu\sqrt{-\kappa B}\, x)
\end{align*}
and so, we take $a=-\dfrac{\lm \tau}{\sqrt{-B}}\sinh(\eta(y)+\nu\sqrt{-\kappa B}\, x)$. Finally,
$$
\left \{
\begin{aligned} 
    \varphi_x&=-\dfrac{\sk}{\lm\tau}B, \\[4pt]
    \varphi_y&=-\frac{a\sk}{\lm \tau}B+b\mu=0,
\end{aligned}
\right .
$$
and we obtain $\varphi(x,y)=-\dfrac{\sk}{\lm\tau}Bx+c$, for some real constant  $c$.

 Using the above results,  we have the following. 
\begin{prop}\label{pepv}
Let $M$ be a helix surface in $\adsl$ with constant angle function $\nu$. With respect to the local coordinates $(x,y)$ defined above, the position vector $F$ of $M$ in $\mathbb{R}^4_2$ satisfies the following equation:
\begin{itemize}
\item [(a)] if $B=0$,
\begin{equation}\label{eqquarta1}
\frac{\partial^2F}{\partial x^2}=0,
\end{equation}
\item [(b)] if $B\neq 0$,
\begin{equation}\label{eqpv1}
\frac{\partial^4 F}{\partial x^4}+(\tb^2+2\ta)\frac{\partial^2 F}{\partial x^2}+\ta^2 F=0,
\end{equation}
where
$$
\ta=\kq\frac{B}{\tq}(\lm+\nq), \qquad \tb=-\sk\frac{B}{\lm \tau}.
$$
\end{itemize}
\end{prop}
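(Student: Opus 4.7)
My plan is to convert the equation $\partial_x F = T$ into a paraquaternion-valued linear ODE $F' = AF$, exploiting the formulas $E_1 = \tfrac{\sk}{2\tau}iF$, $E_2 = \tfrac{\sk}{2}jF$, $E_3 = \tfrac{\sk}{2}kF$, which express the frame vectors as left multiplications on the position $q = F$. Substituting the components $T^1 = 1+\lm\nq$, $T^2 = -\nu\rad\cos\varphi$, $T^3 = -\nu\rad\sin\varphi$ derived previously, together with $\varphi = \tb\, x + \eta(y)$ when $B \neq 0$ (resp.\ $\varphi = y + c$ when $B = 0$), yields $A = A_0 + A_1(x)$, where $A_0 = \tfrac{\sk(1+\lm\nq)}{2\tau}\,i$ is constant in $x$ and $A_1(x) = \tfrac{\sk\beta}{2}\zeta(x)$ with $\beta = -\nu\rad$ and $\zeta(x) = \cos\varphi\,j + \sin\varphi\,k$. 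The decisive algebraic observation is that $i$ anticommutes with $\zeta$ in $\B$, hence $A_0 A_1 + A_1 A_0 = 0$, so $A^2 = A_0^2 + A_1^2$ is a \emph{scalar}; a short manipulation using $B = \nq(\tq-1) - \lm$ and $\lm^2 = 1$ identifies it as $A^2 = \ta$.

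For case (a), $B = 0$ forces simultaneously $\tb = 0$ (so $\zeta$, and therefore $A_1$, is $x$-independent, hence $A_1' = 0$) and $\ta = 0$ (since the definition of $\ta$ carries a factor of $B$). Thus $F'' = A_1' F + A^2 F = 0$, which is \eqref{eqquarta1}.

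For case (b), the relation $\zeta'' = -\tb^2\zeta$ gives $A_1'' = -\tb^2 A_1$. Iterating $F' = AF$ three more times, using the Leibniz rule in $\B$ together with $A_0' = 0$ and the fact that $A^2$ is the scalar $\ta$, one obtains expressions for $F''$, $F'''$ and $F''''$ as left multiplications of $F$ by suitable paraquaternions. Using the further identities $i\zeta = \tilde\zeta$, $\zeta i = -\tilde\zeta$ (where $\tilde\zeta = -\sin\varphi\,j + \cos\varphi\,k$) and $\tilde\zeta\zeta = i$, one checks that $A_1 A_0$ is a real multiple of $\tilde\zeta$, that $A_0 A_1 = -A_1 A_0$, and that $A_1^2$ and $(A_1')^2$ are real scalars. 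Consequently the combination $F'''' + (\tb^2 + 2\ta)\,F'' + \ta^2 F$ takes the form $c_1 \tilde\zeta F + c_2 F$ with $c_1, c_2$ real constants; the two conditions $c_1 = 0$ and $c_2 = 0$ reduce, after substituting the expressions for $\ta$ and $\tb$, to the two scalar identities $\sk\,\tb\,(1+\lm\nq)/\tau = -4\ta$ and $\tb^2(1+\lm\nq)^2 = 4\ta^2\tq/\kappa$, both of which follow at once from $\lm(1+\lm\nq) = \lm + \nq$ and $\lm^2 = 1$. I expect the principal obstacle to be the non-commutative bookkeeping in $\B$; once the two scalar identities above are isolated, their verification is mechanical.
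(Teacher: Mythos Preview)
Your approach is correct and is a genuinely cleaner packaging than the paper's argument. The paper works in coordinates: it writes out $\partial_x F = T$ componentwise as a system of four scalar first-order ODEs (their \eqref{dexF}), differentiates once to obtain a coupled second-order system of the form $(F_i)_{xx} = -\ta F_i \mp \tb (F_{i\pm 1})_x$ (their \eqref{dexxF}), and then differentiates twice more and back-substitutes to reach the fourth-order equation. Your paraquaternionic reformulation $F' = AF$ with $A = A_0 + A_1(x)$ collapses this to a single $\B$-valued linear ODE, and the crucial observation that $i$ anticommutes with $\zeta$ makes $A^2 = A_0^2 + A_1^2$ a \emph{real scalar}, which you correctly identify as $\ta$. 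Together with $A_1'' = -\tb^2 A_1$, this lets you read off the fourth-order relation after checking two scalar identities, rather than tracking four components separately.

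What the paper's route buys is the explicit intermediate second-order system \eqref{dexxF}, which is later reused to compute the inner-product relations \eqref{norme} needed in the characterization theorems; your approach gives the fourth-order ODE more quickly but you would still need to produce those relations separately. What your route buys is transparency: the anticommutation $i\zeta = -\zeta i$ explains structurally \emph{why} a constant-coefficient fourth-order ODE emerges, whereas in the paper this appears only after coordinate manipulation. One small slip: the second scalar identity you state should read $\tb^2(1+\lm\nq)^2 = 16\,\ta^2\tq/\kappa$ (a factor of $4$ is missing); this is what actually falls out when you expand $F'''' + (\tb^2+2\ta)F'' + \ta^2 F$ and set the scalar part to zero, and it is indeed immediate from $(1+\lm\nq)^2 = (\lm+\nq)^2$.
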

\begin{proof}
Let $M$ be a helix surface and let
$$
F(x,y)=(F_1(x,y), F_2(x,y), F_3(x,y), F_4(x,y)).
$$
denote the position vector of $M$ in $\mathbb{R}^4_2$, described with respect to the local coordinates $(x,y)$ defined before. By definition of position vector, we get
\begin{align}\label{TdxF}
\partial_x F&=(\partial_x F_1, \partial_x F_2, \partial_x F_3, \partial_x F_4)=T\notag \\
&=\rad \left[ \lm \rad E_{1|F}-\nu \cos \varphi\, E_{2|F}-\nu \sin \varphi\, E_{3|F}\right].
\end{align}
Then, if we consider the expressions of $E_1, E_2$ and $E_3$ with respect to the coordinates of $\mathbb{R}^4_2$, we can  express the above equation as
\begin{align}\label{dexF}
    \begin{cases}
   \partial_x F_1=\radkm\left[ -\dfrac{\lm}{\tau} \rad\, F_2-\nu \cos \varphi \, F_3-\nu \sin \varphi \, F_4 \right],\\[7pt]
   \partial_x F_2=\radkm\left[ \dfrac{\lm}{\tau} \rad\, F_1+\nu \cos \varphi \, F_4-\nu \sin \varphi \, F_3 \right],\\[7pt]
   \partial_x F_3=\radkm\left[ -\dfrac{\lm}{\tau} \rad\, F_4-\nu \cos \varphi \, F_1-\nu \sin \varphi \, F_2 \right],\\[7pt]
   \partial_x F_4=\radkm\left[ \dfrac{\lm}{\tau} \rad\, F_3+\nu \cos \varphi \, F_2-\nu \sin \varphi \, F_1 \right].
    \end{cases}
\end{align}
Therefore, if $B = 0$ taking the derivative of \eqref{dexF} with respect to $x$,  we get \eqref{eqquarta1}.  

If we suppose $B\neq 0$,   we obtain
\begin{align} \label{dexxF}
    \begin{cases}
    (F_1)_{xx}=-\ta \, F_1-\tb\,(F_2)_x,\\[4pt]
    (F_2)_{xx}=-\ta \, F_2+\tb\,(F_1)_x,\\[4pt]
   	(F_3)_{xx}=-\ta \, F_3-\tb\,(F_4)_x,\\[4pt]
	(F_4)_{xx}=-\ta \, F_4+\tb\,(F_3)_x,
    \end{cases}
\end{align}
where
$$
\ta=\kq\frac{B}{\tq}(\lm+\nq), \qquad \tb=-\sk\frac{B}{\lm \tau}.
$$
In conclusion, taking twice the derivative of \eqref{dexxF} with respect to $x$ and using the previous relations we find \eqref{eqpv1}.
\end{proof}

\begin{oss}
From $|F|^2=-4/\kappa$ and relations~\eqref{dexF}, \eqref{dexxF}, we get:
\begin{equation}\label{norme}
\begin{array}{lll}
    \langle F, F \rangle=-\dfrac{4}{\kappa},    \quad &   \langle F_x, F_x \rangle=\dfrac{4}{\kappa}\ta,  \quad & \langle F, F_x \rangle=0, \\
       \langle F_x, F_{xx} \rangle=0,    \quad &   \langle F_{xx}, F_{xx} \rangle=D,  \quad & \langle F, F_{xx} \rangle=-\dfrac{4}{\kappa}\ta, \\[7pt]
       \langle F_x, F_{xxx} \rangle=-D,    \quad &   \langle F_{xx}, F_{xxx} \rangle=0,  \quad & \langle F, F_{xxx} \rangle=0,\\[7pt]
      \langle F_{xxx}, F_{xxx} \rangle=E, $ $ 
    \end{array}
\end{equation}
where
$$
D=\frac{4}{\kappa}(\ta \tb^2+3\ta^2), \qquad E=(\tb^2+2\ta)D-\frac{4}{\kappa}\ta^3.
$$
\end{oss}

\section{Characterization theorems for  helix surfaces in $\adsl$}

In order to give  conditions under which an immersion defines a helix surface in $\adsl$ we observe that, if $F$ is a position vector of a helix surface in $\adsl$ we have that:
$$
J_1F=\frac{2}{\sk}X_{1|F(x,y)}=\tau \frac{2}{\sk}E_{1|F(x,y)}
$$
and, using the \eqref{norme}, we have the following identities:
\begin{align}\label{IFnorme}
\begin{array}{ll}
\langle  J_1F, F_x \rangle = -\dfrac{2\lm(\lm+\nq)}{\tau \sk},\quad &
\langle  J_1F, F_{xx} \rangle = 0,\\[9pt]
\langle  J_1F_{xx}, F_x \rangle = \ta \left[\dfrac{2\lm(\lm+\nq)}{\tau \sk} -\dfrac{4}{\kappa} \tb \right]=L,\quad &
\langle  J_1F_{x}, F_{xxx} \rangle = 0,\\[9pt]
\langle  J_1F_{x}, F_{xx} \rangle + \langle  J_1F, F_{xxx} \rangle = 0, \quad &
\langle  J_1F_{xx}, F_{xxx} \rangle + \langle  J_1F_x, F_{xxxx} \rangle = 0.
\end{array}
\end{align}
We now use these relations to prove the following key result.
\begin{prop}\label{prop-vice}
Let $F: \Omega \rightarrow \adsl \subset \mathbb R^4_2$ be an immersion from an open set $\Omega\subset \mathbb{R}^2$, with local coordinates $(x,y)$ such that the projection of $E_1=\dfrac{\sk}{2\tau}J_1F$ is $F_x$. Then $F(\Omega)\subset \adsl$ defines a helix surface of constant angle function $\nu$ if and only if
\begin{align}\label{547}
     \gt (F_x,F_x)=\gt(E_1,F_x)=-\lm{(\lm+\nu^2)}
\end{align} 
and
\begin{align}\label{548}
     \gt (F_x,F_y)-\gt(F_y,E_1)=0.
\end{align}
\end{prop}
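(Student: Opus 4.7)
The plan is to exploit the tangent--normal decomposition of $E_1$ permitted by the projection hypothesis and to translate the constancy of the angle function into the algebraic identities~\eqref{547}--\eqref{548}. Writing $N$ for the unit normal to $F(\Omega)$, so that $\gt(N,N)=\lm$, the hypothesis that the tangential projection of $E_1$ equals $F_x$ provides a smooth function $\gamma$ on $\Omega$ such that
\begin{equation*}
E_1=F_x+\gamma N.
\end{equation*}
The angle function of $F$ is then $\lm\,\gt(N,E_1)=\lm\gamma\,\gt(N,N)=\lm^2\gamma=\gamma$, so the problem reduces to showing that $\gamma$ is constantly equal to $\pm\nu$ if and only if~\eqref{547}--\eqref{548} hold.

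For the direct implication, set $\gamma=\nu$ (constant). The orthogonality relations $\gt(N,F_x)=\gt(N,F_y)=0$ yield at once $\gt(E_1,F_x)=\gt(F_x,F_x)$ and $\gt(E_1,F_y)=\gt(F_y,F_x)$, establishing the first equality in~\eqref{547} and the identity~\eqref{548}. To compute the common value in~\eqref{547}, expand $-1=\gt(E_1,E_1)=\gt(F_x,F_x)+\lm\nu^2$, which gives $\gt(F_x,F_x)=-1-\lm\nu^2=-\lm(\lm+\nu^2)$ using $\lm^2=1$.

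For the converse, the same expansion with $\gamma$ unknown gives $\gt(F_x,F_x)=-\lm(\lm+\gamma^2)$; comparison with the value forced by~\eqref{547} yields $\gamma^2=\nu^2$. Since $\nu\neq 0$ by Remark~\ref{nun0} and $\gamma$ is smooth, $\gamma$ cannot change sign and is therefore constantly equal to $\pm\nu$, so the angle function is constant and $F$ defines a helix surface of angle $\nu$.

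The computations are routine; the only conceptual point worth flagging is that the first equality in~\eqref{547} and the identity~\eqref{548} are in fact automatic consequences of the projection hypothesis (they merely express that $E_1-F_x$ is $\gt$-orthogonal to the tangent plane of $F(\Omega)$). Thus the substantive content of the two conditions lies entirely in the prescribed value $-\lm(\lm+\nu^2)$, which is precisely what pins down the magnitude, and hence the constancy, of $\gamma$.
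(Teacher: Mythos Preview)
Your proof is correct and follows the same essential idea as the paper---the tangent--normal decomposition $E_1=F_x+\gamma N$ and the computation of $\gamma^2$ from $\gt(E_1,E_1)=-1$. The converse arguments are virtually identical. For the forward direction, however, the paper takes a more computational route: it expresses $\gt(F_x,F_x)$ in terms of the ambient inner product $\langle\,,\,\rangle$ and then invokes the previously derived scalar-product relations \eqref{norme} and \eqref{IFnorme} (namely $\langle F_x,F_x\rangle=\tfrac{4}{\kappa}\tilde a$ and $\langle J_1F,F_x\rangle=-\tfrac{2\lm(\lm+\nu^2)}{\tau\sqrt\kappa}$) to evaluate it. Your route---expanding $-1=\gt(E_1,E_1)$ directly---is shorter and self-contained, and it makes transparent your closing observation that the first equality in \eqref{547} and all of \eqref{548} are automatic consequences of the projection hypothesis, so that the entire content of the proposition rests on the single numerical condition $\gt(F_x,F_x)=-\lm(\lm+\nu^2)$. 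The paper's converse in fact re-derives the projection hypothesis from \eqref{547}--\eqref{548} (via the auxiliary vector $\tilde T$ and the conclusion $c_1=1$), which confirms your remark about redundancy.
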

\begin{proof}
Suppose that $F(\Omega)$ is a helix surface in $\adsl$ of constant angle function $\nu$, then we have:
\begin{align*}
\gt(F_x,F_x)&=\langle F_x,F_x\rangle+(1-\tq)\langle F_x,X_1\rangle^2=\langle F_x,F_x\rangle+(1-\tq)\kq\langle F_x,J_1F\rangle^2\\
            &=\frac{4}{\kappa}\ta+(1-\tq)\left(-\dfrac{2\lm(\lm+\nq)}{\tau \sk}\right)^2=-\lm(\lm+\nq).
\end{align*}
In a similar way we find:
\begin{align*}
\gt(F_x,E_1)&=\frac{1}{\tau}\left[\langle F_x,X_1\rangle-(1-\tq)\langle F_x,X_1\rangle\right]\\
            &=\tau\langle F_x,X_1\rangle=-\lm(\lm+\nq),
\end{align*}
that leads to the equation \eqref{547}. In addition,  we have 
\begin{align*}
     \gt(F_y,E_1)=\gt(F_y,F_x+\nu N)=\gt(F_y,F_x)
\end{align*}
that is equation \eqref{548}.

To prove the converse, consider
\begin{align*}
            \tilde{T}=F_y-\frac{\gt(F_y,F_x)F_x}{\gt(F_x,F_x)}.
        \end{align*}
Then, we get the orthonormal basis $\{F_x, \tilde{T}, N\}$ for the tangent space to $\adsl$ along $F(\Omega)$.  Moreover,  from \eqref{547} and \eqref{548} we have:
        \begin{align*}
            \gt(E_1, \tilde{T})=\gt(E_1, F_y)-\gt(E_1, F_x)\frac{\gt(F_y,F_x)}{\gt(F_x,F_x)}=0.
        \end{align*}
This leads to $E_1=c_1F_x+c_2N$. Moreover from \eqref{547} we have $c_1=1$ that means $(E_1)^T=F_x$ and also
        \begin{align*}
            -1=\gt(E_1,E_1)=-(1+\lm\nu^2)+c_2^2\lm
        \end{align*}
whence, $\gt(E_1,N)=\lm \vert\nu\vert$ is constant and so, $F(\Omega)$ is a helix surface.
\end{proof}
In order to get explicit solutions of equations \eqref{eqpv1} and \eqref{eqquarta1} we consider three different cases, depending on the different possibilities for $B$.

\subsection{ Helix surfaces of $\adsl$ in the case $B>0$}
Integrating \eqref{eqpv1} we prove the following.

\begin{prop}\label{t1}
Let $M$ be a helix surface in $\adsl$ with constant angle function $\nu$ such that $B>0$. Then, with respect to the local coordinates $(x,y)$ defined above, the position vector $F$ of $M$ in $\mathbb{R}^4_2$ is explicitly given by
$$
F(x,y)=\cos(\alpha_1 x)w^1(y)+\sin(\alpha_1 x)w^2(y)+\cos(\alpha_2 x)w^3(y)+\sin(\alpha_2 x)w^4(y),
$$
where
$$
\alpha_{1,2}=\skm\Big(|\nu|\sqrt{B}\pm \frac{B}{\tau}\Big)
$$
are real constants and $w^i(y)$, $i=1,2,3,4$, are mutually orthogonal vector fields in $\mathbb{R}^4_2$, depending only on $y$, such that, setting $w_{ij}=\langle w^i(y), w^j(y) \rangle$ for all indices $i,j$, we have
\begin{equation}\label{w11andw33}
w_{11}= w_{22}=\frac{4 \tau}{\kappa^\frac{3}{2} B}\alpha_2, \qquad w_{33}= w_{44}= -\frac{4 \tau}{\kappa^\frac{3}{2} B}\alpha_1.
\end{equation}
\end{prop}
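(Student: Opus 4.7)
The plan is to integrate the fourth-order ODE of Proposition~\ref{pepv}(b) explicitly in the $x$-variable and then read off the stated orthogonality and norm relations on the $w^i(y)$ from the inner-product identities \eqref{norme}.

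First I would analyse the characteristic equation $r^4+(\tb^2+2\ta)r^2+\ta^2=0$. Setting $r^2=-\alpha^2$, this becomes a quadratic in $\alpha^2$ with Vieta sum $\tb^2+2\ta$ and product $\ta^2$. Using the defining relation $B=\nq(\tq-1)-\lm$, which yields the key identity $B+\lm+\nq=\nq\tq$, a short manipulation gives $\tb^2+4\ta=\kappa B\nq$ and $\tb^2+2\ta=\kappa B(B+\nq\tq)/(2\tq)$. Both quantities are positive, since $B>0$, $\nu\neq 0$, and $\lm+\nq>0$ by Remark~\ref{nun0}, and a direct verification shows that the proposed values $\alpha_{1,2}=\skm(|\nu|\sqrt{B}\pm B/\tau)$ satisfy $\alpha_1^2+\alpha_2^2=\tb^2+2\ta$ and $\alpha_1\alpha_2=\ta$. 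By Vieta this identifies the four characteristic roots as $\pm i\alpha_1,\pm i\alpha_2$. Since $y$ enters the ODE only as a parameter, the general solution is precisely the one stated, with four $y$-dependent vector coefficients $w^1,\dots,w^4$ in $\R^4_2$.

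Next I would substitute this form of $F$ into the identity $\langle F,F\rangle=-4/\kappa$ from \eqref{norme}. Expanding via double-angle and product-to-sum formulas produces a trigonometric polynomial in $x$ supported on the frequencies $0$, $2\alpha_1$, $2\alpha_2$, $\alpha_1+\alpha_2$ and $\alpha_1-\alpha_2$, which are pairwise distinct and nonzero under the standing hypotheses $B>0$ and $\nu\neq 0$. Matching the $\cos(2\alpha_j x)$ coefficients gives $w_{11}=w_{22}$ and $w_{33}=w_{44}$; matching $\sin(2\alpha_j x)$ gives $w_{12}=w_{34}=0$; matching the four $(\alpha_1\pm\alpha_2)$-harmonics jointly forces $w_{13}=w_{14}=w_{23}=w_{24}=0$. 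This establishes mutual orthogonality of $\{w^1,w^2,w^3,w^4\}$, while the constant term of the same identity yields $w_{11}+w_{33}=-4/\kappa$. To separate the two remaining norms I would plug the corresponding expansion of $F_x$ into $\langle F_x,F_x\rangle=4\ta/\kappa$ from \eqref{norme}: after the orthogonality just derived, this collapses to $\alpha_1^2 w_{11}+\alpha_2^2 w_{33}=4\alpha_1\alpha_2/\kappa$. Solving the resulting $2\times 2$ linear system in $(w_{11},w_{33})$ and using $\alpha_1-\alpha_2=\sk B/\tau$ produces precisely the formulas \eqref{w11andw33}; the other identities in \eqref{norme} and \eqref{IFnorme} are compatible and serve only as consistency checks.

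The main obstacle is the algebraic bookkeeping in the first step: one has to recognise simultaneously that $\tb^2+2\ta=\alpha_1^2+\alpha_2^2$ and $\tb^2+4\ta=(\alpha_1+\alpha_2)^2$, which hinges on the crucial simplification $B+\lm+\nq=\nq\tq$ coming from the definition of $B$. Once the characteristic roots are available in the explicit form $\pm i\alpha_{1,2}$, the remainder is pure harmonic matching, which is routine precisely because the frequencies $2\alpha_1$, $2\alpha_2$, $\alpha_1\pm\alpha_2$ are all distinct and nonzero when $B>0$ and $\nu\neq 0$.
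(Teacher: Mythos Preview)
Your argument follows the same two-step strategy as the paper---integrate the linear ODE \eqref{eqpv1} and then mine the inner-product identities \eqref{norme} for the relations among the $w_{ij}$---but you execute the second step by harmonic matching in $x$, whereas the paper evaluates all ten identities in \eqref{norme} at $x=0$ and solves the resulting linear system directly.

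One small gap: your claim that the five frequencies $0,\,2\alpha_1,\,2\alpha_2,\,\alpha_1\pm\alpha_2$ are pairwise distinct whenever $B>0$ and $\nu\neq 0$ is not true in general. For example, $\alpha_1=3\alpha_2$ (equivalently $2\alpha_2=\alpha_1-\alpha_2$) occurs precisely when $3B=\lm+\nq$, which is compatible with $B>0$; a concrete instance is $\lm=1$, $\nu^2=4$, $\tau^2=5/3$, giving $B=5/3$. In such a coincidence the expansion of $\langle F,F\rangle$ merges the $\cos(2\alpha_2 x)$ and $\cos((\alpha_1-\alpha_2)x)$ coefficients, so you no longer obtain $w_{33}=w_{44}$ and $w_{13}+w_{24}=0$ separately from that single identity. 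The paper's device of evaluating the full list \eqref{norme} at $x=0$ avoids any appeal to frequency independence and is therefore the safer route; alternatively, you can repair your version by bringing in further identities from \eqref{norme} before declaring orthogonality.
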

\begin{proof}
If $B>0$, then $\tb^2+2\ta>0$ and 
$\tb^2+4\ta>0$. Integrating equation \eqref{eqpv1}, we then obtain
\begin{equation*}
F(x,y)=\cos(\alpha_1 x)w^1(y)+\sin(\alpha_1 x)w^2(y)+\cos(\alpha_2 x)w^3(y)+\sin(\alpha_2 x)w^4(y)
\end{equation*}
where
$$
\alpha_{1,2}=\sqrt{\frac{\tb^2+2\ta\pm\sqrt{\tb^2(\tb^2+4\ta)}}{2}}
$$
are two real constants and $w^i(y)$, $i=1,2,3,4$, are  vector fields in $\mathbb{R}^4_2$, depending only on $y$.

Using the expressions of $\ta$ and $\tb$, we have 
$$
\alpha_{1,2}=\skm\Big(|\nu|\sqrt{B}\pm \frac{B}{\tau}\Big).
$$

Setting  $w_{ij}=\langle w^i(y), w^j(y)\rangle$ and evaluating relations \eqref{norme} at $(0,y)$, we get
\begin{align}
w_{11}+2w_{13}+w_{33}=-\frac{4}{\kappa}, \label{e5} \\
\alpha_1^2w_{22}+2\alpha_1\alpha_2w_{24}+\alpha_2^2w_{44}=\frac{4}{\kappa}\ta, \label{e6}\\
\alpha_1w_{12}+\alpha_2w_{14}+\alpha_1w_{23}+\alpha_2w_{34}=0, \label{e7}\\
\alpha_1^3w_{12}+\alpha_2\alpha_1^2w_{14}+\alpha_1\alpha_2^2w_{23}+\alpha_2^3w_{34}=0, \label{e8}\\
\alpha_1^4w_{11}+2\alpha_1^2\alpha_2^2w_{13}+\alpha_2^4w_{33}=D, \label{e9} \\
\alpha_1^2w_{11}+(\alpha_1^2+\alpha_2^2)w_{13}+\alpha_2^2w_{33}=\frac{4}{\kappa}\ta, \label{e10} \\
\alpha_1^4w_{22}+(\alpha_1^3\alpha_2+\alpha_1\alpha_2^3)w_{24}+\alpha_2^4w_{44}=D, \label{e11}\\
\alpha_1^5w_{12}+\alpha_2^3\alpha_1^2w_{14}+\alpha_1^3\alpha_2^2w_{23}+\alpha_2^5w_{34}=0, \label{e12}\\
\alpha_1^3w_{12}+\alpha_2^3w_{14}+\alpha_1^3w_{23}+\alpha_2^3w_{34}=0, \label{e13}\\
\alpha_1^6w_{22}+2\alpha_1^3\alpha_2^3w_{24}+\alpha_2^6w_{44}=E \label{e14}.
\end{align}
From \eqref{e7}, \eqref{e8}, \eqref{e12} and \eqref{e13}, it follows that
$$
w_{12}=w_{14}=w_{23}=w_{34}=0.
$$
Moreover, \eqref{e5}, \eqref{e9} and \eqref{e10} yield
$$
w_{11}=\frac{4  \tau}{\kappa^\frac{3}{2} B}\alpha_2, \qquad 
 w_{13}=0,  \qquad 
w_{33}=-\frac{4   \tau}{\kappa^\frac{3}{2} B}\alpha_1.
$$
Next, by \eqref{e6}, \eqref{e11} and \eqref{e14} we get
$$
w_{22}=w_{11}>0, \qquad
 w_{24}=0, \qquad
w_{44}=w_{33}<0.
$$
\end{proof}
We now prove the following result.

\begin{thm}[of characterization for $B>0$]\label{teo-principal1}
Let $M$ be a helix surface in $\adsl \subset \Rqd$ with constant angle function  $\nu$ so that $B>0$. Then, locally, the position vector of $M$ in $\R_4^2$, with respect to the local coordinates $(u, v)$ on $M$ defined in \eqref{localsystem}, is
$$
F(x,y)=A(y)\,\gamma(x),
$$
where
$$
\gamma(x)=(\sqrt{w_{11}}\,\cos (\alpha_1\, x), -\lambda\sqrt{w_{11}}\,\sin (\alpha_1\, x),\sqrt{-w_{33}}\,\cos (\alpha_2\, x),\lambda \sqrt{-w_{33}}\,\sin (\alpha_2\, x)),
$$
is a twisted geodesic of the Lorentz torus $\mathbb{S}^1(\sqrt{w_{11}})\times \mathbb{S}^1(\sqrt{-w_{33}})  \subset \adsl $, $w_{11}$, $w_{33}$, $\alpha_1$, $\alpha_2$ are the four constants given in Theorem~\ref{t1}, and $A(y)=A(\xi,\xi_1,\xi_2,\xi_3)(y)$ is a 1-parameter family of $4\times 4$ pseudo-orthogonal matrices commuting with $J_1$, as described in \eqref{J_1}, where $\xi$ is a constant and
\begin{align}\label{eq-alpha123}
\cosh^2(\xi_1(y))\,\xi_2'(y)+\sinh^2(\xi_1(y))\,\xi_3'(y)=0.
\end{align}

Conversely, a parametrization $F(x,y)=A(y)\,\gamma(x)$, with $\gamma(x)$ and $A(y)$ as above, defines a  helix surface in $\adsl$ with constant angle function $\nu$.
\end{thm}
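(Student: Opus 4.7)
The plan is to build on Proposition~\ref{t1}, which already provides the explicit functional form
\[
F(x,y)=\cos(\alpha_1 x)\,w^1(y)+\sin(\alpha_1 x)\,w^2(y)+\cos(\alpha_2 x)\,w^3(y)+\sin(\alpha_2 x)\,w^4(y),
\]
together with the orthogonality relations $w_{ij}=0$ for $i\neq j$ (except the $w_{13}$ case already handled) and the norm formulas \eqref{w11andw33}. The first step is to repackage this expression as $F(x,y)=A(y)\gamma(x)$ by defining the rows of $A(y)$ as suitably renormalized versions of the $w^i(y)$, so that $\gamma(x)$ becomes the fixed curve displayed in the statement. The orthogonality/norm relations already ensure that $A(y)$ is pseudo-orthogonal for each $y$.

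The next step, which I expect to be the key algebraic obstacle, is to show that $A(y)$ commutes with $J_1$ and has the precise block structure \eqref{eq-descrizione-A}. The input here is the identity $E_1=\tfrac{\sqrt{\kappa}}{2\tau}\,J_1 F$ together with the constraint $F_x=(E_1)^T$ from Proposition~\ref{prop-vice}. Differentiating $F$ in $x$, equating coefficients of $\cos(\alpha_i x)$ and $\sin(\alpha_i x)$, and using the relations in \eqref{IFnorme} evaluated at $x=0$, one obtains pairwise identities of the form $J_1 w^1=\pm w^2$ and $J_1 w^3=\pm w^4$. These are exactly the relations that force the second and fourth rows of $A(y)$ to be $\pm J_1$ applied to the first and third, i.e.\ the $J_1$-commutation condition. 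The description \eqref{eq-descrizione-A} then follows by writing the first row in the general parametric form $(\cosh\xi_1\cos\xi_2,\,-\cosh\xi_1\sin\xi_2,\,\sinh\xi_1\cos\xi_3,\,-\sinh\xi_1\sin\xi_3)$ and expanding the third row in the pseudo-orthonormal basis $\{{\mathbf r}_1, J_1{\mathbf r}_1, J_2{\mathbf r}_1, J_3{\mathbf r}_1\}$ with coefficient angle $\xi$.

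Next I will extract the two scalar conditions on $\xi,\xi_1,\xi_2,\xi_3$. The constancy of $\xi$ comes from the $x$-independence of the block mixing in the $F(x,y)=A(y)\gamma(x)$ expansion: differentiating in $y$ and projecting onto the subspaces spanned by $J_2{\mathbf r}_1,J_3{\mathbf r}_1$ shows $\xi'(y)=0$. The differential equation \eqref{eq-alpha123} is then obtained by imposing the compatibility condition $\gt(F_x,F_y)-\gt(F_y,E_1)=0$ from Proposition~\ref{prop-vice}, which after plugging in \eqref{TdxF} and the explicit form of the first row ${\mathbf r}_1(y)$ reduces to exactly $\cosh^2(\xi_1)\,\xi_2'+\sinh^2(\xi_1)\,\xi_3'=0$. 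To verify that $\gamma(x)$ is a twisted geodesic of the Lorentz torus $\mathbb{S}^1(\sqrt{w_{11}})\times\mathbb{S}^1(\sqrt{-w_{33}})\subset\adsl$, one checks directly that each of its two planar components lies on the appropriate circle of radius $\sqrt{w_{11}}$ or $\sqrt{-w_{33}}$ and that its covariant acceleration in $\adsl$ has constant decomposition along the torus.

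For the converse, I start from an $F(x,y)=A(y)\gamma(x)$ with $A(y)$ as in \eqref{eq-descrizione-A}, constant $\xi$, and $\xi_1,\xi_2,\xi_3$ satisfying \eqref{eq-alpha123}. A direct computation shows $\langle F,F\rangle=-4/\kappa$, so $F(\Omega)\subset\adsl$. Evaluating $F_x$ at each $(x,y)$ and using the $J_1$-commutation of $A(y)$, one verifies $F_x=\tfrac{\sqrt{\kappa}}{2\tau}J_1 F$ up to the required tangential projection, giving $\gt(F_x,F_x)=\gt(E_1,F_x)=-\lambda(\lambda+\nu^2)$, i.e.\ \eqref{547}. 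Finally, \eqref{eq-alpha123} is precisely what one needs to obtain $\gt(F_x,F_y)-\gt(F_y,E_1)=0$, so Proposition~\ref{prop-vice} applies and $F$ parametrizes a helix surface of constant angle function $\nu$.
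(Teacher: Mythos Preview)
Your overall strategy matches the paper's: start from Proposition~\ref{t1}, normalize the $w^i$ to a pseudo-orthonormal frame $e_i$, use the identities \eqref{IFnorme} evaluated at $x=0$ to obtain $J_1e_1=-\lambda e_2$, $J_1e_3=\lambda e_4$, and hence the factorization $F(x,y)=A(y)\gamma(x)$ with $A(y)$ commuting with $J_1$; then extract the conditions on $\xi,\xi_1,\xi_2,\xi_3$; finally invoke Proposition~\ref{prop-vice} for the converse. The parts dealing with the $J_1$-commutation, the derivation of \eqref{eq-alpha123} from \eqref{548}, and the converse are all in line with the paper.

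There is, however, a genuine gap in your argument for $\xi=\mathrm{constant}$. Your claim that ``differentiating in $y$ and projecting onto the subspaces spanned by $J_2{\mathbf r}_1,J_3{\mathbf r}_1$ shows $\xi'(y)=0$'' is not sufficient. What the paper actually obtains from the correct input---namely, $\langle F_y,F_y\rangle=\lambda+\nu^2=\mathrm{constant}$, hence $\partial_x\langle F_y,F_y\rangle|_{x=0}=0$---is only a system of the form $\xi'\,h_1(y)=0$, $\xi'\,k_1(y)=0$, where $h_1^2+k_1^2=4(\xi_1')^2+\sinh^2(2\xi_1)\,(-\xi'+\xi_2'+\xi_3')^2$. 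This gives a dichotomy: either $\xi'=0$, or $h_1=k_1=0$. You have no mechanism to exclude the second alternative; your ``$x$-independence of the block mixing'' does not see it.

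The paper resolves this by a separate geometric argument: it computes the $E_1$-component $N_1$ of the unit normal to $F(x,y)=A(y)\gamma(x)$ explicitly and shows that in the second alternative one gets $N_1\equiv 0$, so $g_\tau(N,E_1)=0$ and the surface is a Hopf tube, contradicting $\nu\neq 0$. Without this step (or an equivalent exclusion of case~(ii)), the conclusion $\xi=\mathrm{constant}$ is not justified, and hence neither is the reduction of \eqref{548} to the clean form \eqref{eq-alpha123}. You should insert this Hopf-tube exclusion argument before asserting $\xi'=0$.
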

\begin{proof}
With respect to the local coordinates $(x,y)$ on $M$ defined in \eqref{localsystem}, Theorem \ref{t1} implies that the position vector of the helix  surface in $\R^4_2$ is given by
$$
F(x,y)=\cos(\alpha_1 x)w^1(y)+\sin(\alpha_1 x)w^2(y)+\cos(\alpha_2 x)w^3(y)+\sin(\alpha_2 x)w^4(y),
$$
where the vector fields  $\{w^i(y)\}_{i=1}^4$ are mutually orthogonal and
$$
\begin{array}{l}
||w^1(y)||=||w^2(y)||=\sluu=\text{constant}, \\[5pt]
||w^3(y)||=||w^4(y)||=\sltt=\text{constant}.
\end{array}
$$
Thus, if we put $e_i(y)=w^i(y)/||w^i(y)||$, $i\in\{1,\dots,4\}$, we can write:
\begin{equation}\label{eq:Fei}
F(x,y) = \sluu\,(\cos(\alpha_1 x)e_1(y)+\sin(\alpha_1 x)e_2(y))+\sltt\,(\cos(\alpha_2 x)e_3(y)+\sin(\alpha_2 x)e_4(y)).
\end{equation}

The identities \eqref{IFnorme}, evaluated in $(0,y)$,  yield:
\begin{equation}\label{eq1bis}\begin{aligned}
    &\alpha_1\,w_{11}\langle J_1e_1,e_2\rangle-\alpha_2 w_{33}\langle J_1e_3,e_4\rangle +\sqrt{-w_{11}w_{33}}\,
    (\alpha_1\langle J_1e_3,e_2\rangle+\alpha_2\langle J_1e_1,e_4\rangle)
    =-\dfrac{2\lm(\lm+\nq)}{\tau \sk},
    \end{aligned}
\end{equation}
\begin{equation}
     \langle J_1e_1,e_3\rangle=0,
\end{equation}
\begin{equation}\label{39}\begin{aligned}
w_{11}\alpha_1^3\langle J_1e_1,e_2\rangle+\sqrt{-w_{11}w_{33}}(\alpha_1^2\alpha_2 \langle J_1e_1,e_4\rangle+\alpha_2^2\alpha_1 \langle J_1e_3,e_2\rangle)-w_{33}\alpha_2^3\langle J_1e_3,e_4\rangle=-L,
\end{aligned}
\end{equation}
\begin{equation}
    \langle J_1e_2,e_4\rangle=0,
\end{equation}
\begin{equation}\label{eq2bis}
    \alpha_1\langle J_1e_2,e_3\rangle+\alpha_2\langle J_1e_1,e_4\rangle=0,
\end{equation}
\begin{equation}\label{eq3bis}
    \alpha_2\langle J_1 e_2,e_3\rangle+\alpha_1\langle J_1e_1,e_4\rangle=0.
\end{equation}
We point out that  to obtain the previous identities we have divided by $\alpha_1^2-\alpha_2^2=\kappa \tau^{-1}|\nu|\sqrt{B^3}$, which,  by the assumption on $\nu$, is always different from zero.
From \eqref{eq2bis} and \eqref{eq3bis}, taking into account  $\alpha_1^2-\alpha_2^2\neq 0$, we find
\begin{equation}\label{eq4bis}
     \langle J_1e_3,e_2\rangle=0,\qquad \langle J_1e_1,e_4\rangle=0.
\end{equation}
Therefore,
$$
|\langle J_1 e_1,e_2\rangle|=1=|\langle J_1 e_3,e_4\rangle|.
$$
Substituting \eqref{eq4bis} in \eqref{eq1bis} and \eqref{39}, we obtain the system
\begin{equation}\label{j34}
\left\{\begin{aligned}\nonumber
&\alpha_1\, w_{11}\langle J_1e_1,e_2\rangle-\alpha_2\,w_{33}\langle J_1e_3,e_4\rangle=-\dfrac{2\lm(\lm+\nq)}{\tau \sk}, \\
 & \alpha_1^3\, w_{11}\langle J_1e_1,e_2\rangle-\alpha_2^3\,w_{33}\langle J_1e_3,e_4\rangle=-L,
\end{aligned}
\right.
\end{equation}
a solution of which is given by
$$
\langle J_1e_1,e_2\rangle=\frac{2\alpha_2^2(1+\lm \nq)-\tau L \sk}{\tau \sk \, w_{11}\,\alpha_1 (\alpha_1^2-\alpha_2^2)},\qquad \langle J_1e_3,e_4\rangle=\frac{2 \alpha_1^2(1+\lm \nq)+\tau L \sk}{\tau  \sk \, w_{33}\,\alpha_2 (\alpha_1^2-\alpha_2^2)}.
$$
Consequently,
$\langle J_1e_1,e_2\rangle=\langle J_1e_3,e_4\rangle=-\lm$ and $J_1e_1=-\lm e_2$, $J_1e_3=\lm e_4$.
Then, if we fix the {pseudo-}orthonormal basis  of $\R^4_2$ given by
$$
\tilde{E}_1=(1,0,0,0),\quad \tilde{E}_2=(0,-\lambda,0,0),\quad \tilde{E}_3=(0,0,1,0),\quad \tilde{E}_4=(0,0,0,\lambda),
$$
there must exists a $1$-parameter family of $4\times 4$ pseudo-orthogonal matrices $A(y)\in \mathrm{O}_2(4)$, with $J_1A(y)=A(y)J_1$,
such that $e_i(y)=A(y)\tilde{E}_i$ for all indices $i=1,\dots,4$..
Replacing $e_i(y)=A(y)\tilde{E}_i$ in \eqref{eq:Fei} we obtain
$$
F(x,y)=A(y)\gamma(x),
$$
where
$$
\gamma(x)=(\sqrt{w_{11}}\,\cos (\alpha_1\, x), -\lambda\sqrt{w_{11}}\,\sin (\alpha_1\, x),\sqrt{-w_{33}}\,\cos (\alpha_2\, x),\lambda \sqrt{-w_{33}}\,\sin (\alpha_2\, x)),
$$
is a twisted geodesic of the Lorentzian torus $\mathbb{S}^1(\sqrt{w_{11}})\times \mathbb{S}^1(\sqrt{-w_{33}})\subset\adsl$. 

Let now examine the $1$-parameter family $A(y)$ that, according to \eqref{eq-descrizione-A},
depends on four functions $\xi_1(y)$, $\xi_2(y)$, $\xi_3(y)$ and $\xi(y)$. From \eqref{localsystem}, it follows that $\langle F_y, F_y\rangle=(\lm+\nq)=\mathrm{constant}$. The latter implies that
\begin{equation}\label{eq-fv-fv-sin-theta-d-u}
\frac{\partial}{\partial x}\langle F_y, F_y\rangle_{| x=0}=0.
\end{equation}
Now, if we denote by ${\mathbf c_1},{\mathbf c_2},{\mathbf c_3},{\mathbf c_4}$ the four colons of $A(y)$, \eqref{eq-fv-fv-sin-theta-d-u} implies that
\begin{equation}\label{sistem-c23-c24}
\begin{cases}
\langle {\mathbf c_2}',{\mathbf c_3}'\rangle=0\\
\langle {\mathbf c_2}',{\mathbf c_4}'\rangle=0,
\end{cases}
\end{equation}
where by $'$ we mean the derivative with respect to $y$.
Replacing in \eqref{sistem-c23-c24} the expressions of the ${\mathbf c_i}$'s as functions of $\xi_1(y),\xi_2(y),\xi_3(y)$ and $\xi(y)$, we obtain
\begin{equation}\label{sistemK-H}
\begin{cases}
\xi'\, {h_1(y)}=0\\
\xi'\, {k_1(y)}=0,
\end{cases}
\end{equation}
where $h_1(y)$ and $k_1(y)$ are two functions such that
$$
h_1^2+k_1^2=4 (\xi_1')^2+\sinh^2(2\xi_1)\, (-\xi'+\xi_2'+\xi_3')^2.
$$
From \eqref{sistemK-H} we have two possibilities:
\begin{itemize}
\item[(i)] $\xi=\cst$;
\item[] or
\item[(ii)] $4 (\xi_1')^2+\sinh^2(2\xi_1)\, (-\xi'+\xi_2'+\xi_3')^2=0$.
\end{itemize}
We will show that case (ii) cannot occurs. More precisely, if (ii) occurs, then the parametrization $F(x,y)=A(y)\gamma(x)$ defines a Hopf tube, that is, the hyperbolic Hopf vector field $E_1$ is tangent to the surface. 
To this end,  we write the unit normal vector field $N$ to the parametrization
$F(x,y)$ as
$$
N=\frac{N_1 E_1+N_2E_2+N_3 E_3}{\sqrt{|-N_1^2+N_2^2+N_3^2|}},
$$
where
$$
\begin{cases}
N_1=g_{\tau}(F_x,E_3)g_{\tau}(F_y,E_2)-g_{\tau}(F_x,E_2)g_{\tau}(F_y,E_3),\\
N_2=g_{\tau}(F_x,E_1)g_{\tau}(F_y,E_3)-g_{\tau}(F_x,E_3)g_{\tau}(F_y,E_1),\\
N_3=g_{\tau}(F_x,E_2)g_{\tau}(F_y,E_1)-g_{\tau}(F_x,E_1)g_{\tau}(F_y,E_2).
\end{cases}
$$
A long computation then gives
$$\begin{aligned}
N_1&=1/2 (\alpha_1 + \alpha_2) \sqrt{w_{11}} \sqrt{-w_{33}}\,[-2\xi_1'\,\cos(\alpha_1 x+\alpha_2 x+\xi_2-\xi_3)\\&+
  \sinh(2\xi_1)\sin(\alpha_1 x+\alpha_2 x+\xi_2-\xi_3)( -\xi'+\xi_2'+\xi_3')].
  \end{aligned}
$$
Now, case (ii) occurs if and only if either $\xi_1=\mathrm{constant}=0$ or $\xi_1=\cst\neq 0$ and $-\xi'+\xi_2'+\xi_3'=0$. In both cases, we conclude that $N_1=0$ and this implies that $$g_\tau(N, J_1F)=\dfrac{2 \tau}{\sk}g_{\tau}(N,E_1)=0,$$ i.e. the Hopf vector field is tangent to the surface. \\
Thus, we are left with the case where $\xi=\cst$. In this case, \eqref{548} is equivalent to
$$
\cosh^2(\xi_1(y))\,\xi_2'(y)+\sinh^2(\xi_1(y))\,\xi_3'(y)=0
$$
whence we conclude that condition \eqref{eq-alpha123} is satisfied.

The converse of the theorem follows immediately from Proposition \ref{prop-vice}, since a direct
calculation shows that $g_{\tau}(F_x,F_x)=g_{\tau}(E_1,F_x)=-\lm(\lm+\nu^2)$ (and so, \eqref{547} holds),
while \eqref{eq-alpha123} is equivalent to \eqref{548}.
\end{proof}

\begin{cor}\label{cor-2}
Let $M$ be a helix spacelike (respectively, timelike) surface in  $\adsl \subset \Rqd$ with constant angle function $\nu$ such that $B>0$. Then, there exist local coordinates on $M$ such that the position vector of $M$ in $\r^4_2$  is given by
$$
F(s,y)=A(y)\,\gamma(s),
$$
where
$$
\gamma(s)=\frac{2}{\sk}\frac{1}{\sqrt{d^2-1}}\Big(\cos \Big(\frac{\sqrt{\kappa}}{2}\,d\,s\Big),-\lambda\sin \Big(\frac{\sqrt{\kappa}}{2}\,d\,s\Big),d\, \cos \Big(\frac{\sqrt{\kappa}}{2}\frac{s}{d}\Big),\lambda\,d\,\sin \Big(\frac{\sqrt{\kappa}}{2}\frac{s}{d}\Big)\Big)
$$
is a twisted geodesic in the Lorentz torus $\s^1\left(\frac{2}{\sqrt{\kappa}}\frac{1}{\sqrt{d^2-1}}\right)\times\s^1\left(\frac{2}{\sqrt{\kappa}}\frac{d}{\sqrt{d^2-1}}\right)\subset \adsl$ parametrized by arc length, whose slope is given by
$$
d=\frac{\sqrt{B}+\tau\,|\nu|}{\sqrt{\lambda+\nu^2}}.
$$
In addition, $A(y)=A(\xi,\xi_1,\xi_2,\xi_3)(y)$ is a $1$-parameter family of $4\times 4$ pseudo-orthogonal matrices commuting with $J_1$, as described in \eqref{eq-descrizione-A}, with $\xi=\cst$ and
$$
 \cosh^2(\xi_1(y))\xi_2'(y)+\sinh^2(\xi_1(y))\xi_3'(y)=0.
$$
Conversely, a parametrization $F(s,y)=A(y)\,\gamma(s)$, with $\gamma(s)$ and $A(y)$ as above, defines
 a helix surface in $\adsl$.
\end{cor}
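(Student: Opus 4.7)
The plan is to deduce Corollary~\ref{cor-2} from Theorem~\ref{teo-principal1} by a change of parameter, rewriting the data in terms of the single quantity $d$. Theorem~\ref{teo-principal1} already provides the factorization $F(x,y) = A(y)\gamma(x)$ with a family of pseudo-orthogonal matrices $A(y)$ satisfying exactly the conditions required by the corollary, so the only work is to rescale $x$ to an arc-length parameter $s$ along $\gamma$ and to simplify the constants $\alpha_{1,2}$, $\sqrt{w_{11}}$, $\sqrt{-w_{33}}$ accordingly.

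The first step is the elementary identity
\[
\tau^2\nu^2 - B = \nu^2 + \lm,
\]
which follows directly from $B=\nu^2(\tau^2-1)-\lm$. Rewriting the factors in Proposition~\ref{t1} as
\[
\alpha_{1,2} = \frac{\sqrt{\kappa}\sqrt{B}}{2\tau}\bigl(\tau|\nu|\pm\sqrt{B}\bigr),
\]
this identity gives $(\tau|\nu|+\sqrt{B})(\tau|\nu|-\sqrt{B})=\nu^2+\lm$, and hence $\tau|\nu|+\sqrt{B}= d\sqrt{\nu^2+\lm}$ and $\tau|\nu|-\sqrt{B}= \sqrt{\nu^2+\lm}/d$. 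Setting $s=c\,x$ with $c=\sqrt{B(\nu^2+\lm)}/\tau$ then turns the two angular frequencies of $\gamma$ into $(\sqrt{\kappa}/2)\,d$ and $(\sqrt{\kappa}/2)/d$, exactly as in the statement.

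For the radii I would invoke \eqref{w11andw33} together with
\[
d^2-1 = \frac{2\sqrt{B}\,d}{\sqrt{\nu^2+\lm}},
\]
which is obtained expanding $(\sqrt{B}+\tau|\nu|)^2-(\nu^2+\lm)$ and using $B=\tau^2\nu^2-\nu^2-\lm$. Substituting the expressions of $\alpha_{1,2}$ into $w_{11}=4\tau\alpha_2/(\kappa^{3/2}B)$ and $-w_{33}=4\tau\alpha_1/(\kappa^{3/2}B)$ yields
\[
\sqrt{w_{11}} = \frac{2}{\sqrt{\kappa}\sqrt{d^2-1}},\qquad \sqrt{-w_{33}} = \frac{2d}{\sqrt{\kappa}\sqrt{d^2-1}}.
\]
A direct computation of $\langle \gamma'(s),\gamma'(s)\rangle$ in $\Rqd$ then gives $d^2/(d^2-1)-1/(d^2-1)=1$, so that $s$ is a spacelike arc-length parameter along the twisted geodesic on the Lorentz torus $\mathbb{S}^1(\sqrt{w_{11}})\times\mathbb{S}^1(\sqrt{-w_{33}})\subset\adsl$. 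The description of the family $A(y)$, together with the constraint $\cosh^2\xi_1(y)\,\xi_2'(y)+\sinh^2\xi_1(y)\,\xi_3'(y)=0$, is inherited verbatim from Theorem~\ref{teo-principal1}.

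The converse is immediate from Proposition~\ref{prop-vice}: exactly as in the last paragraph of the proof of Theorem~\ref{teo-principal1} one checks that $\gt(F_s,F_s)=\gt(E_1,F_s)=-\lm(\lm+\nu^2)$ and that \eqref{eq-alpha123} is equivalent to \eqref{548}. The main obstacle is really the bookkeeping: reducing the four independent constants $(\kappa,\tau,\nu,\lm)$ encoded in $\alpha_{1,2}$ and $w_{ii}$ down to the single parameter $d$ is a slightly delicate algebraic simplification, and one has to keep careful track of the sign of $\lm$ so as to handle uniformly both the spacelike ($\lm=-1$) and the timelike ($\lm=1$) case.
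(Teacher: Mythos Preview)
Your proposal is correct and follows essentially the same route as the paper: start from the factorization $F(x,y)=A(y)\gamma(x)$ of Theorem~\ref{teo-principal1}, pass to an arc-length parameter along $\gamma$, and rewrite $\alpha_{1,2}$, $w_{11}$, $w_{33}$ in terms of the single slope $d$. The paper organizes the algebra a bit differently---it defines $d:=\sqrt{\alpha_1/\alpha_2}$ first and then simplifies to the closed form, and it obtains the radii from the pair of relations $w_{11}/(-w_{33})=\alpha_2/\alpha_1=1/d^2$ and $w_{11}+w_{33}=-4/\kappa$ (the latter being \eqref{e5} with $w_{13}=0$) rather than by direct substitution into \eqref{w11andw33}---but the content is identical.

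One small slip: in your converse you write $\gt(F_s,F_s)=\gt(E_1,F_s)=-\lm(\lm+\nu^2)$, but after the rescaling $s=cx$ these quantities pick up factors $c^{-2}$ and $c^{-1}$, so Proposition~\ref{prop-vice} does not apply verbatim in the $(s,y)$ coordinates. This is harmless, since the converse follows immediately from that of Theorem~\ref{teo-principal1} by undoing the reparametrization; just phrase it that way.
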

\begin{proof}
We consider the curve $\gamma(x)$ given in the Theorem~\ref{teo-principal1}. 
Since $\langle\gamma'(x),\gamma'(x)\rangle=\dfrac{4}{\kappa}\alpha_1\,\alpha_2$, considering $$d:=\sqrt{\frac{\alpha_1}{\alpha_2}}=\frac{\sqrt{B}+\tau\,|\nu|}{\sqrt{\lambda+\nu^2}},$$ from equation~\eqref{w11andw33} and taking into account the equation \eqref{e5} with $w_{13}=0$, we get
$$w_{11}=\frac{4}{\kappa}\frac{1}{d^2-1},\qquad w_{33}=-\frac{4}{\kappa}\frac{d^2}{d^2-1}.$$ Observe that $d>1$. Therefore,
we can consider the arc length reparameterization  of the curve $\gamma$ given by
$$
\gamma(s)=\frac{2}{\sqrt{\kappa}}\frac{1}{\sqrt{d^2-1}}\Big(\cos \Big(\frac{\sqrt{\kappa}}{2}\,d\,s\Big),-\lambda\sin \Big(\frac{\sqrt{\kappa}}{2}\,d\,s\Big),d\, \cos \Big(\frac{\sqrt{\kappa}}{2}\frac{s}{d}\Big),\lambda\,d\,\sin \Big(\frac{\sqrt{\kappa}}{2}\frac{s}{d}\Big)\Big).
$$
Finally, we observe that $d$ represents the slope of the geodesic $\gamma$.
\end{proof}

\subsection{Helix surfaces of $\adsl$ in the case $B=0$}
Integrating \eqref{eqquarta1} and taking into account $\partial_xF=T$,  we prove at once the following.

\begin{prop}\label{t2}
Let $M$ be a helix surface in $\adsl\subset\mathbb{R}^4_2$ with constant angle function $\nu$ such that $B=0$. Then, with respect to the local coordinates $(x,y)$ defined in \eqref{localsystem}, the position vector $F$ of $M$ in $\mathbb{R}^4_2$ is given by
$$
F(x,y)=T(y)x+w(y),
$$
where $w(y)$ is a timelike unit vector field in $\mathbb{R}^4_2$, depending only on $y$.
\end{prop}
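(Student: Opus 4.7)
The plan is to integrate the second-order PDE of Proposition~\ref{pepv}(a), which in the case $B=0$ reduces to $\partial_x^2 F = 0$, i.e.\ equation \eqref{eqquarta1}. This is much simpler than the fourth-order equation that arose for $B\neq 0$, so no characteristic-root analysis is needed here.

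First I would integrate $\partial_x^2 F = 0$ once in $x$ to obtain $\partial_x F = \alpha(y)$ for some $\mathbb{R}^4_2$-valued function $\alpha$ depending only on $y$, and a second time to get
$$
F(x,y) = \alpha(y)\,x + w(y),
$$
for a second $\mathbb{R}^4_2$-valued function $w(y)$. No constants of integration in $x$ are needed beyond these two vector-valued functions.

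Next I would identify $\alpha(y)$ with the tangent vector field $T$. By the choice of local coordinates in \eqref{localsystem} we have $\partial_x = T$, hence $\partial_x F = T$ along the immersion. Combined with $\partial_x F = \alpha(y)$, this forces $T$ to be independent of $x$, so we may write $T = T(y)$ and $\alpha(y) = T(y)$, giving the announced form $F(x,y) = T(y)\,x + w(y)$. This is the only delicate point of the argument, but it is consistent with the explicit expression obtained earlier in the $B=0$ case ($\varphi(x,y) = y + c$), which indeed makes the coefficients of $E_1,E_2,E_3$ entering the decomposition of $T$ functions of $y$ alone.

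Finally I would read off the causal character of $w$ by evaluating at $x=0$: since $F$ takes values in $\adsl \subset \mathbb{R}^4_2$, we have $w(y) = F(0,y) \in \adsl$, whence $\langle w(y), w(y)\rangle = -4/\kappa$ is constant and negative. Thus $w$ is a timelike vector field of constant norm in $\mathbb{R}^4_2$, which is what is meant here by ``timelike unit''. The main obstacle, such as it is, lies only in recognising that $F_x = T$ is automatically $x$-independent once $F_{xx}=0$; everything else is a direct integration.
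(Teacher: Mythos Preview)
Your proposal is correct and follows the same route as the paper: integrate $\partial_x^2 F=0$ twice, identify the first integration ``constant'' with $T$ via $\partial_x F=T$ from \eqref{localsystem}, and read off $\langle w,w\rangle=-4/\kappa$ from $w(y)=F(0,y)\in\adsl$. The paper compresses all of this into a single sentence preceding the statement, so your write-up is actually more detailed than the original.
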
 

We are now ready to prove the following result.
\begin{thm}[of characterization for $B=0$]\label{tprincipalB0}
Let $M$ be a helix surface in $\adsl\subset\mathbb{R}^4_2$ with constant angle function $\nu$ such that $B=0$. Then, with respect to the local coordinates $(x,y)$ defined in \eqref{localsystem}, the position vector $F$ of $M$ in $\mathbb{R}^4_2$ is given by
$$
F(x,y)=A(y)\gamma(x),
$$
where
$$
\gamma(x)=\left(\nu^2\tau x,0,\dfrac{2}{\sqrt{\kappa}},\nu^2\tau\lambda x\right)
$$
is a straight line of $\adsl$ (contained in the  plane $x_2=x_3-\frac{2}{\sk}=0$) and $A(y)=A(\xi,\xi_1,\xi_2,\xi_3)(y)$ is a $1$-parameter family of $4\times4$ indefinite orthogonal matrices commuting with $J_1$ as described in \eqref{eq-descrizione-A},
with
\begin{equation}\begin{aligned}\label{eq-alpha123-b-0}
 &[\xi_2'(y)+\xi_3'(y)-\xi'(y)]\,\sin(\xi_2(y)-\xi_3(y))\,\sinh(2\xi_1(y))\\&-2\, \lambda(\xi'(y)-\xi_2'(y))\,\cosh ^2\xi_1(y)+2\,[\xi_1'(y)\,\cos(\xi_2(y)-\xi_3(y))+\lambda \, \xi_3'(y)\,\sinh^2\xi_1(y)]=0.
  \end{aligned}
\end{equation}
Conversely, a parametrization
$$
F(x,y)=A(y)\left(\nu^2\tau x,0,\dfrac{2}{\sqrt{\kappa}},\nu^2\tau\lambda x\right),
$$
with $A(y)$ as above, defines a helix surface in the anti-De Sitter space $\adsl$ with constant angle  function $\nu$.
\end{thm}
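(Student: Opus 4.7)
The plan is to mirror the strategy of the proof of Theorem~\ref{teo-principal1}, adapted to the degenerate case $B=0$ where both $\ta$ and $\tb$ vanish. Starting from Proposition~\ref{t2}, the position vector has the form $F(x,y)=T(y)\,x+w(y)$, so $F_x=T(y)$ and $F_{xx}=0$. Evaluating the relations \eqref{norme} and \eqref{IFnorme} at $x=0$ then collapses most identities trivially and leaves the core information
$$
\langle w,w\rangle=-\tfrac{4}{\kappa},\qquad \langle T,T\rangle=0,\qquad \langle w,T\rangle=0,\qquad \langle J_1 w,T\rangle=-\tfrac{2\lm(\lm+\nq)}{\tau\sk}.
$$
Note that $T(y)$ is null in $\Rqd$, consistently with $\gamma'(x)=\nu^{2}\tau(\tilde E_1+\lm \tilde E_4)$ being a null vector (since $\lm^{2}=1$).

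Next I would choose, at each $y$, a pseudo-orthonormal basis $\{e_1(y),e_2(y),e_3(y),e_4(y)\}$ of $\Rqd$ compatible with the paraquaternionic structure, i.e.\ $J_1 e_1=-\lm e_2$ and $J_1 e_3=\lm e_4$, and arranged so that $w(y)=\frac{2}{\sk}e_3(y)$ and $T(y)=\nu^{2}\tau\,(e_1(y)+\lm\,e_4(y))$. The above four scalar identities force precisely these normalizations, and the value of $\langle J_1 w,T\rangle$ is exactly what is needed to align the $J_1$-invariant $2$-planes spanned by $\{e_1,e_2\}$ and $\{e_3,e_4\}$. Taking $A(y)$ to be the pseudo-orthogonal matrix carrying the standard basis $\{\tilde E_i\}$ to $\{e_i(y)\}$, one has $A(y)\in\mathrm{U}_1(2)$ commuting with $J_1$, and $F(x,y)=A(y)\gamma(x)$ with $\gamma(x)=(\nu^{2}\tau x,0,\tfrac{2}{\sk},\nu^{2}\tau\lm x)$, as claimed.

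Then, using the parametrization \eqref{eq-descrizione-A} of $A(y)$ by four functions $\xi_1,\xi_2,\xi_3,\xi$, I would translate the helix condition~\eqref{548} of Proposition~\ref{prop-vice} into a single scalar PDE on these functions. A direct computation of $g_\tau(F_x,F_y)-g_\tau(F_y,E_1)$, using the four rows of \eqref{eq-descrizione-A} and the identity $E_1=\tfrac{\sk}{2\tau}J_1F$, should give exactly the left-hand side of \eqref{eq-alpha123-b-0}; unlike the case $B>0$, no separate obstruction forces $\xi$ to be constant, because the Hopf-tube alternative does not arise here (the combination of vanishing $B$ and $\nu\neq 0$ excludes it).

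For the converse, by Proposition~\ref{prop-vice} it suffices to verify \eqref{547} and \eqref{548} for $F(x,y)=A(y)\gamma(x)$. The first equality is an immediate Minkowski computation on $\gamma$ together with the pseudo-orthogonality of $A(y)$, while the second follows by construction, since \eqref{eq-alpha123-b-0} was derived as the necessary and sufficient translation of \eqref{548}. The main obstacle I expect is the derivation of \eqref{eq-alpha123-b-0} itself: it involves a long trigonometric/hyperbolic manipulation of the rows of $A(y)$ and their $y$-derivatives, and extracting the condition in precisely the compact form displayed (rather than an equivalent but messier expression) will require some care in grouping the terms coming from $\xi_1',\xi_2',\xi_3',\xi'$.
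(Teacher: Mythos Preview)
Your plan is correct and follows essentially the same route as the paper's own proof. The only difference worth noting is that the paper builds the adapted pseudo-orthonormal frame explicitly---setting $g^{3}(y)=\tfrac{\sqrt{\kappa}}{2}\,w(y)$ and $g^{1}(y)=\tfrac{1}{|\nu|}\,G(0,y)$, where $G(0,y)=-\nu\cos\varphi\,E_{2|F(0,y)}-\nu\sin\varphi\,E_{3|F(0,y)}$ is the horizontal part of $T$ read off from \eqref{TdxF}---rather than asserting its existence from the four scalar identities (which by themselves do not single out $e_{1}$ inside the spacelike $J_{1}$-plane); it then observes that \eqref{548} reduces to $\langle F_{x},F_{y}\rangle=0$ before extracting \eqref{eq-alpha123-b-0}.
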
 

\begin{proof}
From Proposition~\ref{t2} we say that
\begin{align}\label{Vx}
F(x,y)=T(y)x+w(y),
\end{align}
where $w(y)$ is a vector field in $\mathbb{R}^4_2$, depending only on $y$. 
Using \eqref{Vx} and evaluating the first three equations of \eqref{norme} and the second equation of \eqref{IFnorme} at $(0,y)$, we get the following identities:
\begin{equation}\label{scprod}
\begin{array}{ll}
    \langle F, F \rangle=\langle w(y),w(y)\rangle=-\dfrac{4}{\kappa},    \qquad &   \langle F_x, F_x \rangle=\langle T(y),T(y)\rangle=0,\\  
    \langle F, F_x \rangle=\langle w(y),T(y) \rangle=0,    \qquad &   \langle J_1w, T \rangle=-\dfrac{2\lambda (\lambda+\nu^2)}{\tau \sqrt{\kappa}}.    
    \end{array}
\end{equation}
Moreover, evaluating \eqref{TdxF} in $(0,y)$, setting 
$$
G(0,y)=-\nu \cos \varphi\, E_{2|F(0,y)}-\nu \sin \varphi\, E_{3|F(0,y)} .
$$
and using  \eqref{scprod}, we have
$$
\begin{array}{ll}
    \langle J_1w, G(0,y) \rangle=0,    \quad &   \langle G(0,y), G(0,y) \rangle=\nu^2 . 
    \end{array}
$$
In particular, setting
$$
g^1(y)=\frac{1}{\vert \nu\vert}G(0,y), \qquad 
g^3(y)=\frac{\sqrt{\kappa}}{2}w(y),
$$
we have that $\{g^1(y),J_1g^1(y),g^3(y),J_1g^3(y)\}$ is a pseudo-orthonormal basis of $\mathbb{R}^4_2$.
Consequently, if we fix the orthonormal basis $\{\Hat E_i\}^4_{i=1}$ of $\mathbb{R}^4_2$ given by 
$$\Hat E_1=(1,0,0,0),\qquad \Hat E_2=(0,1,0,0), \qquad \Hat E_3=(0,0,1,0),\qquad  \Hat E_4=(0,0,0,1),$$
there exists a $1$-parameter family of matrices $A(y)\in O_2(4)$, with $J_1A(y)=A(y)J_1$ such that
$$g^1(y)=A(y)\Hat E_1,\qquad J_1g^1(y)=A(y)\Hat E_2, \qquad g^3(y)=A(y)\Hat E_3,\qquad  J_1g^3(y)=A(y)\Hat E_4.$$
Then, \eqref{Vx} becomes
$$
F(x,y)=\frac{2}{\sqrt{\kappa}}g^3(y)+\nu^2\tau x \left( g^1(y)+\lambda J_1g^3(y)\right)=A(y)\left(\nu^2\tau x,0,\dfrac{2}{\sqrt{\kappa}},\nu^2\tau\lambda x\right).
$$
Finally, according to \eqref{eq-descrizione-A}, the $1$-parameter family $A(y)$ depends on four functions $\xi_1(y)$, $\xi_2(y)$, $\xi_3(y)$ and $\xi(y)$ and, in this case, condition \eqref{548} reduces to
$\langle F_u, F_v\rangle=0$ which is equivalent to \eqref{eq-alpha123-b-0}.

In order to prove  the converse, let
$$
F(x,y)=A(y)\left(\nu^2\tau x,0,\dfrac{2}{\sqrt{\kappa}},\nu^2\tau\lambda x\right) 
$$
be a parametrization, where $A(y)=A(\xi(y),\xi_1(y),\xi_2(y),\xi_3(y))$ is a $1$-parameter family of pseudo-orthogonal matrices
with functions  $\xi(y),\xi_1(y),\xi_2(y),\xi_3(y)$ satisfying \eqref{eq-alpha123-b-0}. Since $A(y)$ satisfies \eqref{eq-alpha123-b-0},  $F$ satisfies \eqref{548}. Thus, in virtue of Proposition~\ref{prop-vice}, we only have to show that \eqref{547} is satisfied. We put
$$
\gamma(x)=\left(\nu^2\tau x,0,\dfrac{2}{\sqrt{\kappa}},\nu^2\tau\lambda x\right).
$$
Now, using \eqref{31} and taking into account the fact that $A(y)$ commutes with $J_1$, we get
$$
 g_{\tau}(F_x,F_x)=(1-\tau^2)\nu^4\tau^2=-\lm(\lm+\nq)
$$
and similarly,
$$
 g_{\tau}(E_1,F_x)= {\tau} \langle X_1, F_x \rangle =-\lm \nq \tq =-\lambda(\lambda + \nu^2),
$$
which ends the proof. 
\end{proof}
\subsection{Helix surfaces of $\adsl$ in the case $B<0$}
In this case, we start from \eqref{eqpv1} and prove the following result.
\begin{prop}\label{t3}
Let $M$ be a helix surface in $\adsl$ with constant angle function $\nu$ such that $B<0$. Then, with respect to the local coordinates $(x,y)$ defined above, the position vector $F$ of $M$ in $\mathbb{R}^4_2$ is given by
$$
F(x,y)=\cos(\alpha\, x)\,[\cosh (\beta\, x)\,w^1(y)+\sinh (\beta\, x)\,w^3(y)]+\sin(\alpha x)\,[\cosh (\beta\, x)\,w^2(y)+\sinh (\beta\, x)\,w^4(y)],
$$
where
$$
\alpha=-\skm\frac{B}{\lm\tau},\qquad \beta=|\nu|\frac{\sqrt{-\kappa B}}{2},  
$$
are real constants and $w^i(y)$, $i=1,2,3,4$, are linearly independent vector fields in $\mathbb{R}^4_2$, depending only on $y$, such that:
\begin{align}\label{wijbmin0}
w_{11}=w_{22}=-w_{33}=-w_{44}=-\dfrac{4}{\kappa}, \qquad w_{14}=-w_{23}=\dfrac{4 \lm |\nu|\tau}{\kappa \sqrt{-B}}.
\end{align}
\end{prop}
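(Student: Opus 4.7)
The plan is to mirror the proof of Proposition~\ref{t1}: first integrate the fourth-order linear ODE \eqref{eqpv1} explicitly in the case $B<0$, and then read off the relations \eqref{wijbmin0} from the identities \eqref{norme} evaluated at $x=0$.

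\textbf{Step 1 (integration of \eqref{eqpv1}).} Using $B=\nu^2(\tau^2-1)-\lambda$ one checks the three algebraic identities
\[
4\alpha^2=\tb^2,\qquad 4\beta^2=-(\tb^2+4\ta)=-\kappa B\nu^2,\qquad \alpha^2+\beta^2=-\ta,
\]
so that for $B<0$ both $\alpha$ and $\beta$ are real and nonzero. The characteristic polynomial $P(z)=z^4+(\tb^2+2\ta)z^2+\ta^2$ factors as $(z^2+2\beta z-\ta)(z^2-2\beta z-\ta)$, whose four complex roots are $\pm\beta\pm i\alpha$. The general solution of \eqref{eqpv1} can then be written in the asserted form, with four vector-valued coefficients $w^i(y)\in\mathbb{R}^4_2$.

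\textbf{Step 2 (evaluation at $x=0$).} Direct differentiation of $F$ gives
\[
F(0,y)=w^1,\qquad F_x(0,y)=\beta w^3+\alpha w^2,
\]
\[
F_{xx}(0,y)=(\beta^2-\alpha^2)w^1+2\alpha\beta w^4,\qquad F_{xxx}(0,y)=\beta(\beta^2-3\alpha^2)w^3+\alpha(3\beta^2-\alpha^2)w^2.
\]
Substituting into the ten identities \eqref{norme} produces a linear system for the $w_{ij}$. The equations $\langle F,F_x\rangle=0$ and $\langle F,F_{xxx}\rangle=0$ involve only $w_{12}$ and $w_{13}$; their coefficient determinant is $2\alpha\beta(\alpha^2+\beta^2)=-2\alpha\beta\ta\ne 0$, so $w_{12}=w_{13}=0$. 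An analogous argument applied to $\langle F_x,F_{xx}\rangle=0$ and $\langle F_{xx},F_{xxx}\rangle=0$ yields $w_{24}=w_{34}=0$. The identity $\langle F,F\rangle=-4/\kappa$ gives $w_{11}=-4/\kappa$; then $\langle F,F_{xx}\rangle=-4\ta/\kappa$ combined with $\beta^2-\alpha^2-\ta=2\beta^2$ forces $w_{14}=4\beta/(\kappa\alpha)$, which simplifies to $4\lambda|\nu|\tau/(\kappa\sqrt{-B})$ on inserting the definitions of $\alpha$ and $\beta$. The three equations in $w_{22},w_{23},w_{33}$ coming from $\langle F_x,F_x\rangle$, $\langle F_x,F_{xxx}\rangle$ and $\langle F_{xxx},F_{xxx}\rangle$ are then solved, using also the identity $\ta+\tb^2/4=\kappa B\nu^2/4$, to produce $w_{22}=-4/\kappa$, $w_{33}=4/\kappa$ and $w_{23}=-4\lambda|\nu|\tau/(\kappa\sqrt{-B})$. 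Finally $\langle F_{xx},F_{xx}\rangle=D$ yields $w_{44}=4/\kappa$.

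\textbf{Step 3 (linear independence and the main obstacle).} The vectors $w^i(y)$ are linearly independent because their Gram matrix, block-diagonal after reordering indices as $\{1,4\}\cup\{2,3\}$, has determinant $-16(\lambda+\nu^2)/(\kappa^2(-B))$, which is nonzero by Remark~\ref{nun0}. The main technical difficulty is the bookkeeping of step~2: unlike the purely trigonometric situation of Proposition~\ref{t1}, each derivative of $F$ at $x=0$ couples $w^i$ and $w^j$ across the split $\{1,4\}\cup\{2,3\}$, so the resulting linear system has a denser structure. The two algebraic identities $\alpha^2+\beta^2=-\ta$ and $\ta+\tb^2/4=\kappa B\nu^2/4$ are the essential simplifying tools.
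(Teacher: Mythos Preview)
Your proof is correct and follows essentially the same approach as the paper: integrate the ODE \eqref{eqpv1} by exhibiting its characteristic roots $\pm\beta\pm i\alpha$, then evaluate the identities \eqref{norme} at $x=0$ and solve the resulting linear system for the $w_{ij}$. Your Step~3 even adds an explicit linear-independence check via the Gram matrix that the paper omits; note only that the value $-16(\lambda+\nu^2)/(\kappa^2(-B))$ you quote is the determinant of each $2\times 2$ block, so the full Gram determinant is its square---but of course still nonzero.
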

\begin{proof}
As $B <0$, we have $\tb^2+4\ta<0$. Integrating equation \eqref{eqpv1}, we obtain
$$
F(x,y)=\cos(\alpha\, x)[\cosh (\beta\, x)\,w^1(y)+\sinh (\beta\, x)\,w^3(y)]+
\sin(\alpha\, x)[\cosh (\beta\, x)\,w^2(y)+\sinh (\beta\, x)\,w^4(y)],
$$
where
$$
\alpha=\frac{\tb}{2}, \qquad \beta=\dfrac{1}{2}\sqrt{-(\tb^2+4\ta)} 
$$
are real constants and $w^i(y)$, $i=1,2,3,4$, are  vector fields in $\mathbb{R}^4_2$, depending only on $y$.  Moreover, using the definition of $\ta$ and $\tb$, we get
$$
\alpha=-\skm\frac{B}{\lm\tau},\qquad \beta=|\nu|\frac{\sqrt{-\kappa B}}{2}.  
$$
Defining $w_{ij}=\langle w^i(y), w^j(y)\rangle$, for all indices $i,j$ and evaluating the relations \eqref{norme} in $(0,y)$, we find:
\begin{align}
\label{uno2}
    w_{11}=-\dfrac{4}{\kappa},\\
\label{due2}
    \alpha^2\,w_{22}+\beta^2\,w_{33}+2\alpha\,\beta\,w_{23}=\dfrac{4}{\kappa}\tilde{a},\\
\label{tre2}
    \alpha\,w_{12}+\beta \,w_{13}=0,\\
\label{quatro2}
      \alpha\,\Big(\beta^2-\alpha^2\Big)\,w_{12}+2\alpha\,\beta^2\,w_{34}+2\alpha^2\,\beta\,w_{24}
     +\beta\,\Big(\beta^2-\alpha^2\Big)\,w_{13}=0,\\
\label{cinque2}
   \Big(\beta^2-\alpha^2\Big)^2\,w_{11}+4\alpha^2\beta^2\,w_{44}
     +4\alpha\,\beta\,\Big(\beta^2-\alpha^2\Big)\,w_{14}=D,\\
\label{sei2}
   \Big(\beta^2-\alpha^2\Big)\,w_{11}+2 \alpha\,\beta\,w_{14}=-\dfrac{4}{\kappa}\tilde{a},\\
\label{sette2}
    \alpha^2\, \Big(3\beta^2-\alpha^2\Big)\,w_{22}+\beta^2 \,\Big(\beta^2-3\alpha^2\Big)\,w_{33}
     +4\alpha\,\beta\,(\beta^2-\alpha^2)\,w_{23}=-D,\\
\nonumber
   \alpha\, \Big(3\beta^2-\alpha^2\Big)\, \Big(\beta^2-\alpha^2\Big)\,w_{12}+2\alpha\,\beta^2 \,\Big(\beta^2-3\alpha^2\Big)\,w_{34}\\
     +\beta\,\Big(\beta^2-3\alpha^2\Big)\,\Big(\beta^2-\alpha^2\Big)w_{13}+2\alpha^2\,\beta\, \Big(3\beta^2-\alpha^2\Big)\,w_{24}=0,\label{otto2}\\
\label{nove2}
   \alpha\,\Big(3\beta^2-\alpha^2\Big)\,w_{12}+\beta\,\Big(\beta^2-3\alpha^2\Big)\,w_{13}
  =0,\\
\label{dieci2}
   \alpha^2\, \Big(3\beta^2-\alpha^2\Big)^2\,w_{22}+\beta^2 \,\Big(\beta^2-3\alpha^2\Big)^2\,w_{33}+2\alpha\,\beta\,\Big(3\beta^2-\alpha^2\Big)\, \Big(\beta^2-3\alpha^2\Big)\,w_{23}=E.
\end{align}

From  \eqref{uno2}, \eqref{cinque2} and \eqref{sei2}, it follows that
$$w_{11}=-w_{44}=-\dfrac{4}{\kappa}, \qquad w_{14}=\frac{4\beta}{\kappa\alpha}=\dfrac{4 \lm |\nu|\tau}{\kappa \sqrt{-B}}.$$
Also, from \eqref{tre2} and \eqref{nove2}, we obtain
$$w_{12}=w_{13}=0$$ and, therefore, from \eqref{quatro2} and \eqref{otto2},
$$w_{24}=w_{34}=0.$$
Moreover, using \eqref{due2}, \eqref{sette2} and \eqref{dieci2}, we get
$$w_{22}=-w_{33}=-\dfrac{4}{\kappa}, \qquad w_{23}=-\frac{4\beta}{\kappa\alpha}=-\dfrac{4 \lm |\nu|\tau}{\kappa \sqrt{-B}}.$$
\end{proof}
We now prove the following.
\begin{thm}[of characterization for $B<0$]\label{teo-principal2}
Let $M$ be a helix surface in  $\adsl$ with constant angle function $\nu$ so that $B<0$. Then, locally, the position vector of $M$ in $\R^4_2$, with respect to the local coordinates $(x,y)$ on $M$ defined in \eqref{localsystem}, is given by
$$
F(x,y)=A(y)\,\gamma(x),
$$
where the curve
$
\gamma(x)=(\gamma_1(x),\gamma_2(x),\gamma_3(x),\gamma_4(x))
$
has components
$$\left\{\begin{aligned}
\gamma_1(x)&=\frac{2\rad}{\sqrt{-\kappa B}}\,\cos(\alpha x)\,\sinh (\beta\,x),\\
\gamma_2(x)&=\frac{2\rad}{\sqrt{-\kappa B}}\,\sin(\alpha x)\,\sinh (\beta\,x),\\
\gamma_3(x)&=\dfrac{2}{\sk}\cos(\alpha x)\,\cosh (\beta\,x)-\frac{2\lm \tau |\nu|}{\sqrt{-\kappa B}}\,\sin(\alpha x)\,\sinh (\beta\,x),\\
\gamma_4(x)&=\dfrac{2}{\sk}\sin(\alpha x)\,\cosh (\beta\,x)+\frac{2\lm \tau |\nu|}{\sqrt{-\kappa B}}\,\cos(\alpha x)\,\sinh (\beta\,x),\\
\end{aligned}
\right.
$$
with
$$
\alpha=-\skm\frac{B}{\lm\tau},\qquad \beta=|\nu|\frac{\sqrt{-\kappa B}}{2},  
$$
and $A(y)=A(\xi,\xi_1,\xi_2,\xi_3)(y)$ is a $1$-parameter family of $4\times 4$ pseudo-orthogonal matrices commuting with $J_1$, as described in \eqref{J_1}, where $\xi$ is a constant and
\begin{equation}\label{eq-alpha123second}
\begin{aligned}
&\vert \nu\vert \sqrt{\lambda+\nu^2}\,[2\cos(\xi_2(y)-\xi_3(y))\,\xi_1'(y)+(\xi_2'(y)+\xi_3'(y))\sin(\xi_2(y)-\xi_3(y))\,\sinh(2\xi_1(y))]\\
&+2 \lambda \tau \nu^2\,[\cosh^2(\xi_1(y)) \,\xi_{2}'(y)+\sinh^2(\xi_1(y))\, \xi_{3}'(y)] =0.
 \end{aligned}
\end{equation}
Conversely, a parametrization $F(x,y)=A(y)\,\gamma(x)$, with $\gamma(x)$ and $A(y)$ as above, defines
 a helix surface in $\adsl$ with constant angle function $\nu\neq 0$.
\end{thm}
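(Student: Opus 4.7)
The plan is to follow the same strategy used for Theorem~\ref{teo-principal1} and Theorem~\ref{tprincipalB0}. Starting from Proposition~\ref{t3}, the position vector has the form
$$
F(x,y)=\cos(\alpha x)\,[\cosh(\beta x)\,w^1(y)+\sinh(\beta x)\,w^3(y)]+\sin(\alpha x)\,[\cosh(\beta x)\,w^2(y)+\sinh(\beta x)\,w^4(y)],
$$
with the pairwise inner products $w_{ij}$ given by \eqref{wijbmin0}. The first main step is to determine how the paraquaternionic structure $J_1$ acts on the $w^i(y)$. I would evaluate the identities \eqref{IFnorme} at $(0,y)$ and, after using \eqref{wijbmin0}, expand in the expressions for $F_x(0,y)$, $F_{xx}(0,y)$, $F_{xxx}(0,y)$, $F_{xxxx}(0,y)$. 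Since the frequencies $\alpha\pm\beta$ are distinct and $\alpha\beta\neq 0$, the resulting linear system (analogous to \eqref{eq2bis}--\eqref{eq3bis} in the $B>0$ proof) will force the off-diagonal brackets $\langle J_1 w^i,w^j\rangle$ to vanish except for the pairs compatible with $J_1$-invariance, and pin down $\langle J_1 w^1,w^2\rangle$ and $\langle J_1 w^3,w^4\rangle$.

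The second step is to diagonalize. Since $w_{14}$ and $w_{23}$ are nonzero (in contrast to the case $B>0$), the $w^i(y)$ are not themselves a pseudo-orthonormal frame, so I would perform the obvious linear recombination
$$
\tilde w^3=w^3-\tfrac{\lambda|\nu|\tau\alpha}{\beta}\,w^4,\qquad \tilde w^4=w^4+\tfrac{\lambda|\nu|\tau\alpha}{\beta}\,w^3,
$$
(and the analogous one for $w^1,w^2$) to obtain a pseudo-orthonormal basis $\{e_1(y),e_2(y),e_3(y),e_4(y)\}$ of $\Rqd$ with $e_2=\pm J_1 e_1$ and $e_4=\pm J_1 e_3$. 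Rewriting $F$ in this basis and collecting the coefficients of $\cos(\alpha x)\cosh(\beta x)$, $\sin(\alpha x)\sinh(\beta x)$, etc., produces exactly the four components $\gamma_1,\gamma_2,\gamma_3,\gamma_4$ of the stated curve, and the matrix $A(y)$ whose columns are the $e_i(y)$ is pseudo-orthogonal and commutes with $J_1$.

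The third step is to identify the differential constraint \eqref{eq-alpha123second}. Using the parameterization \eqref{eq-descrizione-A}, $A(y)$ depends on four scalar functions $\xi,\xi_1,\xi_2,\xi_3$. As in the proof of Theorem~\ref{teo-principal1}, differentiating $\gt(F_y,F_y)=\lambda+\nu^2=\cst$ at $x=0$ gives a two-equation system whose alternative case (corresponding to $4(\xi_1')^2+\sinh^2(2\xi_1)(-\xi'+\xi_2'+\xi_3')^2=0$) would force the hyperbolic Hopf vector field $E_1$ to be tangent to $M$, hence $\nu=0$, contradicting our assumption. Therefore $\xi$ is constant. Substituting the explicit columns of $A(y)$ into condition \eqref{548} of Proposition~\ref{prop-vice} then yields \eqref{eq-alpha123second} after a direct, if tedious, computation.

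For the converse, given $F(x,y)=A(y)\gamma(x)$ as in the statement, Proposition~\ref{prop-vice} reduces the claim to \eqref{547} and \eqref{548}. Using \eqref{31} and the fact that $A(y)$ commutes with $J_1$, a direct calculation from the explicit $\gamma$ gives $\gt(F_x,F_x)=-\lambda(\lambda+\nu^2)=\tau\langle F_x,X_1\rangle=\gt(E_1,F_x)$, which is \eqref{547}; condition \eqref{eq-alpha123second} is exactly \eqref{548}. The main obstacle I expect is not conceptual but the bookkeeping in the diagonalization step: because the $w_{ij}$ now carry hyperbolic cross-terms $w_{14},w_{23}\neq 0$, keeping track of signs, of the factors $\sqrt{-\kappa B}$ and $\tau|\nu|$, and of the correct rescalings so that the final $\gamma(x)$ matches the stated form requires careful algebra with the $B<0$ frequencies $\alpha,\beta$.
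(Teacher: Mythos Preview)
Your overall strategy matches the paper's proof closely, but two of your concrete steps are incorrect as stated and would not go through.

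First, the orthogonalization you write down does not remove the nonzero cross terms. Since $w_{14}$ and $w_{23}$ are the only nonvanishing off-diagonal inner products, you must combine $w^3$ with $w^2$ (to kill $w_{23}$) and $w^4$ with $w^1$ (to kill $w_{14}$), not $w^3$ with $w^4$. The paper takes
\[
e_3=\tfrac{\sqrt{\kappa}}{2}\,w^1,\quad e_4=\tfrac{\sqrt{\kappa}}{2}\,w^2,\quad
e_1=\tfrac{\sqrt{\kappa}}{2\sqrt{\lambda+\nu^2}}\bigl[\sqrt{-B}\,w^3-\lambda\tau|\nu|\,w^2\bigr],\quad
e_2=\tfrac{\sqrt{\kappa}}{2\sqrt{\lambda+\nu^2}}\bigl[\sqrt{-B}\,w^4+\lambda\tau|\nu|\,w^1\bigr],
\]
and then checks $J_1e_1=e_2$, $J_1e_3=e_4$. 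Your combination $\tilde w^3=w^3-c\,w^4$ still satisfies $\langle \tilde w^3,w^2\rangle=w_{23}\neq 0$, so it does not produce a pseudo-orthonormal frame.

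Second, and more substantively, the two-equation system obtained from $\partial_x\langle F_y,F_y\rangle\big|_{x=0}=0$ is \emph{not} the same as in the $B>0$ case. Because $\gamma$ now involves products $\cos(\alpha x)\cosh(\beta x)$, $\sin(\alpha x)\sinh(\beta x)$, etc., the conditions on the columns $\mathbf c_i$ of $A(y)$ are
\[
\langle \mathbf c_1',\mathbf c_3'\rangle=0,\qquad
2\tau|\nu|\,\langle \mathbf c_2',\mathbf c_3'\rangle+\lambda\sqrt{\lambda+\nu^2}\bigl[\langle \mathbf c_2',\mathbf c_2'\rangle+\langle \mathbf c_3',\mathbf c_3'\rangle\bigr]=0,
\]
which yield $\xi'h_2=0$ and $\xi'k_2=0$ for functions $h_2,k_2$ that differ from those in the $B>0$ argument. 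The degenerate alternative is therefore $h_2=k_2=0$, not the expression $4(\xi_1')^2+\sinh^2(2\xi_1)(-\xi'+\xi_2'+\xi_3')^2=0$ you imported from Theorem~\ref{teo-principal1}. The paper then computes $N_1$ explicitly in terms of $h_2,k_2$ to show this alternative forces a Hopf tube; your sketch would need to redo this computation with the correct $h_2,k_2$. (Also note that the paper works with $\langle F_y,F_y\rangle$, the flat $\mathbb R^4_2$ product, rather than $g_\tau(F_y,F_y)$.)
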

\begin{proof}
From \eqref{wijbmin0}, we can define the following  pseudo-orthonormal basis in $\R^4_2$:
$$\left\{\begin{aligned}
e_1(y)&=\frac{\sk}{2\rad}[\sqrt{-B}\,w^3(y)-\lm\tau|\nu|\,w^2(y)],\\
e_2(y)&=\frac{\sk}{2\rad}[\sqrt{-B}\,w^4(y)+\lm\tau|\nu|\,w^1(y)],\\
e_3(y)&=\skm w^1(y),\\
e_4(y)&=\skm w^2(y),
\end{aligned}
\right.
$$
with $\langle e_1,e_1\rangle=1=\langle e_2,e_2\rangle$ and $\langle e_3,e_3\rangle=-1=\langle e_4,e_4\rangle$.
Evaluating the identities~\eqref{IFnorme} in  $(0,y)$, and taking into account that:
$$
\begin{aligned}\nonumber
&F(0,y)=w^1(y),\\
&F_x(0,y)=\alpha\,w^2(y)+\beta\,w^3(y),\\
&F_{xx}(0,y)=\Big(\beta^2-\alpha^2\Big)\,w^1(y)+2\alpha\,\beta\,w^4(y),\\
&F_{xxx}(0,y)=\alpha\,\Big(3\beta^2 -\alpha^2\Big)\,w^2(y)+\beta\,\Big(\beta^2-{3}\alpha^2\Big)\,w^3(y),\\
&F_{xxxx}(0,y)=\Big(\beta^4-6\alpha^2\,\beta^2+\alpha^4\Big)\,w^1(y)+
4\alpha\,\beta\,\Big(\beta^2-\alpha^2\Big)\,w^4(y),
\end{aligned}
$$
we conclude that
$$
\begin{aligned}\nonumber
\langle J_1w^1,w^2\rangle&=-\langle J_1w^3,w^4\rangle=-\dfrac{4}{\kappa},\\
\langle J_1w^3,w^2\rangle&=\langle J_1w^1,w^4\rangle=0,\\
\langle J_1w^2,w^4\rangle&=\langle J_1w^1,w^3\rangle=-\frac{4 \lm \tau\,|\nu|}{\kappa\sqrt{-B}}.
\end{aligned}
$$
We point out that to obtain the previous identities, we divided by $$\alpha^2-\beta^2=\frac{\kappa}{4}\frac{B}{\tq}(\lm+\nq)$$ which is always different from zero.
Then,
$$\langle J_1e_1,e_2\rangle=-\langle J_1e_3,e_4\rangle=1,$$
$$\langle J_1e_1,e_4\rangle=\langle J_1e_1,e_3\rangle=\langle J_1e_2,e_3\rangle=\langle J_1e_2,e_4\rangle=0.$$ Therefore, we have $$J_1e_1=e_2,\qquad J_1e_3=e_4.$$
Consequently, if we consider the pseudo-orthonormal basis $\{\hat{E}_i\}_{i=1}^4$ of $\R^4_2$ given by
$$
\hat{E}_1=(1,0,0,0),\quad \hat{E}_2=(0,1,0,0),\quad \hat{E}_3=(0,0,1,0),\quad \hat{E}_4=(0,0,0,1),
$$
there must exists a $1$-parameter family of matrices $A(y)\in \mathrm{O}_2(4)$, with $J_1A(y)=A(y)J_1$, such that $e_i(y)=A(y)\hat{E}_i$ for all indices $i=1,\dots,4$.
As 
$$
F=\langle F,e_1\rangle\,e_1+\langle F,e_2\rangle\,e_2-\langle F,e_3\rangle\,e_3-\langle F,e_4\rangle\,e_4,
$$ 
computing $\langle F,e_i\rangle $ and substituting  $e_i(y)=A(y)\hat{E}_i$, we obtain that $F(x,y)=A(y)\,\gamma(x),
$ where $\gamma(x)$ is the curve of $\adsl$ described in the statement.  Proceeding as in the proof of Theorem~\ref{teo-principal1}, we now examine the $1$-parameter family $A(y)$ that, according to \eqref{eq-descrizione-A},
depends on four functions $\xi_1(y)$, $\xi_2(y)$, $\xi_3(y)$ and $\xi(y)$.  from $\langle F_y, F_y\rangle=\lambda + \nu^2=\mathrm{constant}$ we have 
\begin{equation}\label{eq-fv-fv-sin-theta-d-u-3}
\frac{\partial}{\partial x}\langle F_y, F_y\rangle_{| x=0}=0.
\end{equation}
If we denote by ${\mathbf c_1},{\mathbf c_2},{\mathbf c_3},{\mathbf c_4}$ the four columns of $A(y)$, equation \eqref{eq-fv-fv-sin-theta-d-u-3} implies that
\begin{equation}\label{sistem-c23-c24-3}
\begin{cases}
\langle {\mathbf c_1}',{\mathbf c_3}'\rangle=0,\\
2\tau \,\vert \nu\vert\,\langle {\mathbf c_2}',{\mathbf c_3}'\rangle+\lambda \, \sqrt{\lambda+\nu^2}\left[\langle {\mathbf c_2}',{\mathbf c_2}'\rangle+\langle {\mathbf c_3}',{\mathbf c_3}'\rangle\right]=0,
\end{cases}
\end{equation}
where $'$  denotes the derivative with respect to $y$.
Replacing in \eqref{sistem-c23-c24-3} the expressions of the ${\mathbf c_i}$'s as functions of $\xi_1(y),\xi_2(y),\xi_3(y)$ and $\xi(y)$, we obtain
\begin{equation}\label{sistemK-H3}
\begin{cases}
\xi'\, h_2(y)=0,\\
\xi'\, k_2(y)=0,
\end{cases}
\end{equation}
where $h_2(y)$ and $k_2(y)$ are given by
\begin{equation*}
\left\{
\begin{aligned}
 {h_2(y)}&=2\sin(\xi_2-\xi_3)\,\xi_1'+( \xi'-\xi_2'-\xi_3')\cos(\xi_2-\xi_3)\,\sinh(2\xi_1),\\
{k_2(y)}&=\tau\,|\nu|\,[( \xi'-\xi_2'-\xi_3')\sin(\xi_2-\xi_3)\,\sinh(2\xi_1)-2\cos(\xi_2-\xi_3)\,\xi_1']\\ &+\lambda\, (\lambda+\nu^2)\,[2\cosh^2(\xi_1) \xi_{2}'+2\sinh^2(\xi_1)\, \xi_{3}'-\xi' \cosh(2\xi_1)].
\end{aligned}
\right.
\end{equation*}
From \eqref{sistemK-H3} we have two possibilities:
\begin{itemize}
\item[(i)] $\xi=\cst$;
\item[] or
\item[(ii)] {$h_2=k_2=0$}.
\end{itemize}
As 
$$
\begin{aligned}
N_1&=g_{\tau}(F_x,E_3)g_{\tau}(F_y,E_2)-g_{\tau}(F_x,E_2)g_{\tau}(F_y,E_3)\\
&=\sqrt{\frac{\nu^2\,(\lambda+\nu^2)}{\kappa}}\,\Bigg[\cosh(2 \tilde{b} x)\, h_2(y)-\frac{\lambda\sinh(2 \tilde{b} x)\,k_2(y)}{\sqrt{-B}}\Bigg],
\end{aligned}
$$
it results that if the case (ii) happens  than the parametrization $F(x,y)=A(y)\gamma(x)$ defines a Hopf tube. Thus, we can assume that $\xi=\cst$ and in this case  \eqref{548} is equivalent to \eqref{eq-alpha123second}.

The converse easily follows  from Proposition~\ref{prop-vice}, since a direct
calculation shows that $g_{\tau}(F_x,F_x)=g_{\tau}(E_1,F_x)=-\lm(\lm+\nq)$ (and so, \eqref{547} holds),
while \eqref{eq-alpha123second} is equivalent to \eqref{548}.
\end{proof}

\section{Characterization of the helix surfaces of $\adsl$ by general helices }
As a consequence of Proposition~\ref{prop-vice} and the characterization Theorems~\ref{teo-principal1},  \ref{tprincipalB0} and 
\ref{teo-principal2}, in the next result we  will prove that the curves used to describe helix surfaces in 
$\adsl$, are general helices with axis the infinitesimal generator of the Hopf fibers.  We recall that a  {\it general helix} is a non-null curve $\alpha$ in a Lorentzian manifold $(N,h)$, admitting a Killing vector
field $V$ of constant length along $\alpha$,  such that the function angle between $V$ and $\alpha'$ is a non-zero constant. We say that $V$ is an axis of the general helix $\alpha$.   We now prove the following. 
\begin{prop}\label{elica}
The curves $\gamma:\r\to\adsl$  used in the Theorems~\ref{teo-principal1},  \ref{tprincipalB0} and 
\ref{teo-principal2} to characterize a constant angle spacelike (respectively, timelike) surface $M$,  are spacelike (respectively, timelike)  general helices in $\adsl$ with axis $E_1$, so that they meet at constant angle the fibers of the Hopf fibration. This angle is the same in all the three cases.
\end{prop}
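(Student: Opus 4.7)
The plan is to exploit the common structure of all three characterization theorems, rather than handling each parametrization of $\gamma$ separately. In every case, the helix surface is given locally by $F(x,y) = A(y)\,\gamma(x)$, where $A(y)$ is a one-parameter family of elements of $\mathrm{U}_1(2)$ commuting with $J_1$. Since $\mathrm{U}_1(2)$ is the isometry group of $(\adsl, \gt)$, each $A(y)$ is an isometry. Moreover, from the identity
\[
E_1|_q = \dfrac{\sk}{2\tau}\,J_1 q
\]
(valid because $X_1(q)=\tfrac{\sk}{2}\,iq$ and $E_1=\tau^{-1}X_1$), together with the commutation $A(y)J_1=J_1 A(y)$, one obtains
\[
E_1|_{A(y)q} \;=\; \dfrac{\sk}{2\tau}\,J_1\bigl(A(y)q\bigr) \;=\; A(y)\Bigl(\dfrac{\sk}{2\tau}\,J_1 q\Bigr) \;=\; A(y)\,E_1|_q ,
\]
so $A(y)$ preserves the hyperbolic Hopf vector field pointwise.

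Differentiating $F(x,y)=A(y)\,\gamma(x)$ in $x$ gives $F_x(x,y)=A(y)\,\gamma'(x)$. Using the $A(y)$-invariance of both $\gt$ and $E_1$, I then conclude
\[
\gt\bigl(\gamma'(x),\gamma'(x)\bigr) = \gt(F_x,F_x),\qquad
\gt\bigl(\gamma'(x),E_1|_{\gamma(x)}\bigr) = \gt(F_x,E_1|_F),
\]
for any $(x,y)$ in the coordinate domain. By Proposition~\ref{prop-vice}, condition \eqref{547}, the right-hand sides are both equal to the single constant $-\lm(\lm+\nq)$. Hence $\gamma$ is a regular curve of constant causal character: spacelike when $\lm=-1$ (using $\nq>1$ from Remark~\ref{nun0}, so $\lm+\nq>0$) and timelike when $\lm=1$, matching the causal character of $M$ in each case.

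Since $E_1$ is a Killing vector field on $\adsl$ (Section~3) of constant length $\gt(E_1,E_1)=-1$, and the inner product $\gt(\gamma', E_1|_\gamma)$ is a non-zero constant while $\gt(\gamma',\gamma')$ is also a non-zero constant, the function measuring the (hyperbolic) angle between $\gamma'$ and the axis $E_1$ is constant along $\gamma$. This is exactly the definition of a general helix with axis $E_1$; since the integral curves of $E_1$ are the fibers of the hyperbolic Hopf fibration, $\gamma$ meets these fibers at a constant angle. Finally, this constant angle is determined only by $\lm$ and $\nu$ (through the ratio of the constants $-\lm(\lm+\nq)$ and $-1$), so it does not depend on the sign of $B$, proving the uniformity across the three cases.

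The only genuinely delicate point is recognising the pointwise invariance $E_1|_{A(y)q}=A(y)\,E_1|_q$: once this is in hand, the proof reduces to a single invocation of Proposition~\ref{prop-vice} and avoids any explicit computation with the three different parametrizations of $\gamma$ appearing in Theorems~\ref{teo-principal1}, \ref{tprincipalB0}, and \ref{teo-principal2}.
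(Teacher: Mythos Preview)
Your proof is correct and follows essentially the same route as the paper: both exploit the factorization $F(x,y)=A(y)\gamma(x)$, the commutation $A(y)J_1=J_1A(y)$ (yielding $E_1|_{A(y)q}=A(y)E_1|_q$), and the fact that $A(y)$ is a $\gt$-isometry to transfer the identities \eqref{547} from $F_x$ to $\gamma'$, concluding that $\gt(\gamma',\gamma')=\gt(\gamma',E_1)=-\lm(\lm+\nq)$ independently of the sign of $B$. The paper additionally records the normalized angle function $\gt(\gamma',E_1)/\|\gamma'\|_\tau=-\lm\sqrt{\lm+\nq}$, but this is immediate from what you have already established.
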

\begin{proof}
We first observe that in the three cases  the position vector of $M$ has been expressed as
\begin{align*}
F(x,y)=A(y)\gamma(x),
\end{align*}
where $A(y)=A(\xi,\xi_1,\xi_2,\xi_3)(y)$ is a $1$-parameter family of $4\times 4$ pseudo-orthogonal matrices commuting with $J_1$ and $\gamma(x)$ is a curve on $\adsl$. Therefore,  as $F_x=A(y)\gamma'$,  from \eqref{547} we get
\begin{align*}
g_\tau(\gamma',\gamma')=g_\tau(F_x,F_x)=-\lambda\, (\lambda+\nu^2),
\end{align*}
thus we conclude that if $M$ is a spacelike (respectively, timelike) surface, then $\gamma$ is a spacelike (respectively, timelike) curve. In both cases, the above equation yields
$$
\|\gamma'\|_{\tau}=\sqrt{\lambda+\nu^2}.
$$
Moreover,  as $J_1A(y)=A(y)J_1$, we have $${E_1}_{|F}=\frac{\sqrt{k}}{2\tau}J_1 F=\frac{\sqrt{k}}{2\tau} A(y)J_1 \gamma$$ and then,  from \eqref{547}, we obtain
\begin{align*}
g_\tau(\gamma',{E_1}_{|\gamma})&=g_\tau\Big(\gamma', \frac{\sqrt{k}}{2\tau} J_1\gamma\Big)=g_\tau\Big(A(y)\gamma', \frac{\sqrt{k}}{2\tau} A(y) J_1\gamma\Big)\\
&= g_\tau(F_x,{E_1}_{|F})=-\lambda\,(\lambda+\nu^2).
\end{align*}
Therefore, the angle function that $\gamma$ forms with the hyperbolic Hopf vector field is given by
$$
\frac{g_\tau(\gamma',E_1)}{\|\gamma'\|_{\tau}}=-\lambda\,\sqrt{\lambda+\nu^2},
$$
that is, in the three cases described in Theorems \ref{teo-principal1}, \ref{tprincipalB0} and 
\ref{teo-principal2}, $\gamma$ is a general helix, forming the same constant angle with its axis $E_1$.
\end{proof}

\begin{oss}
As we observed in Remark~\ref{rem52}, when $\tau=1$ we get flat helix surfaces in $\ads$ equipped with its standard metric. The results we obtained  are consistent with the ones deduced  in \cite{LO}, under the requirement of constant angle between $N$ and $E_1$. In this case, $B=-\lm $ and so:
\begin{itemize}
\item the case $B>0$ corresponds to Lorentzian helix surfaces considered in \cite{LO}, as $\lm=-1$;
\item the case $B=0$ cannot occur;
\item the case $B<0$ corresponds to  Riemannian helix surfaces considered in \cite{LO}, as $\lm= 1$.
\end{itemize}
\end{oss}



\begin{thebibliography}{9999}

\bibitem{CP}
G. Calvaruso and D. Perrone, \emph{Metrics of Kaluza-Klein type on the anti-de Sitter space $\ads$}, Math. Nachr., \textbf{287} 
(2014), 885--902.

\bibitem{CD}
P. Cermelli and A.J. Di Scala, \emph{Constant-angle surfaces in liquid crystals}, Phil. Mag., \textbf{87} (2007), 1871--1888.

\bibitem{DFVV} 
F. Dillen, J. Fastenakels, J. Van der Veken, and L. Vrancken, \emph{Constant angle surfaces in $\mathbb S^2 \times \mathbb R$}, Monatsh. Math.,  \textbf{152} (2007), 89--96.

\bibitem{DM}
F. Dillen and M.I. Munteanu, \emph{Constant angle surfaces in $\mathbb H^2 \times \mathbb R$}, Bull. Braz. Math. Soc. \textbf{40} (2009), 85--97.

\bibitem{DMVV}
F. Dillen, M.I. Munteanu, J. Van der Veken, and L. Vrancken, \emph{Classification of constant angle surfaces in a warped product}, 
Balkan J. Geom. Appl., \textbf{16} (2011), 35--47.

\bibitem{DR}
A. Di Scala and G. Ruiz-Hernandez, \emph{Helix submanifolds of Euclidean spaces}, Monatsh. Math., \textbf{157} (2009), 205--215.


\bibitem{DR2}
A. Di Scala and G. Ruiz-Hernandez, \emph{Higher codimensional Euclidean helix submanifolds}, Kodai Math. J., \textbf{33} (2010), 
192--210.


\bibitem{FMV}
J. Fastenakels, M.I. Munteanu, and J. Van Der Veken, \emph{Constant angle surfaces in the Heisenberg group}, Acta Math. Sinica, \textbf{27} (2011), 747--756.

\bibitem{LM}
R. Lopez and M.I. Munteanu, \emph{On the geometry of constant angle surfaces in $Sol_3$}, Kyushu J. Math., \textbf{65} (2011), 
237--249.

\bibitem{LM2}
R. Lopez and M.I. Munteanu, \emph{Constant angle surfaces in Minkowski space}, Bull. Belg. Math. Soc. Simon Stevin, \textbf{18} 
(2011), 271--286.

\bibitem{LO}
P. Lucas and J.A. Ortega-Yag\"ues, \emph{Helix surfaces and slant helices in the three-dimensional
anti-De Sitter space}, RACSAM, \textbf{111} (2017), 1201--1222.

\bibitem{MO}
S. Montaldo and I.I. Onnis, \emph{Helix surfaces in the Berger sphere}, Israel J. Math., \textbf{201} (2014), 949--966.

\bibitem{MOP} S. Montaldo, I.I. Onnis and A.P. Passamani, \emph {Helix surfaces in the special linear group}. Ann. Mat. Pura Appl.  \textbf{195} (2016),  59-77.

\bibitem{Ni}
A.I. Nistor, \emph{Constant angle surfaces in solvable Lie groups}, kyushu J. Math., \textbf{68} (2014), 315--332.




\bibitem{OP}
I.I. Onnis and P. Piu, \emph{Constant angle surfaces in the Lorentzian Heisenberg group}, Arch. Math., \textbf{109} (2017), 575--589.
1456--1478.

\bibitem{OPP}
I.I. Onnis, A.P. Passamani and P. Piu, \emph{Constant Angle Surfaces in Lorentzian Berger Spheres}, J. Geom. Anal., \textbf{29} (2019),
1456--1478.

\bibitem{Ra}
{S. Rahmani}, \emph{M\'{e}triques de Lorentz sur les groupes de Lie unimodulaires de dimension trois}, {J. Geom. Phys.} \textbf{9} 
(1992), 295--302.


\bibitem{Ru}
G. Ruiz-Hernandez, \emph{Minimal helix surfaces in $N^n \times \mathbb R$}, Abh. Math. Semin. Univ. Hamburgh, \textbf{81} (2011), 
55--67.


\end{thebibliography}
\end{document}